\documentclass[12pt]{article}
\usepackage{mathrsfs}
\usepackage{dsfont}
\usepackage{amsthm}
\usepackage{mathrsfs}
\usepackage{amsmath}
\usepackage{amsfonts}
\usepackage[colorlinks,linkcolor=black,anchorcolor=blue,citecolor=blue]{hyperref}
\usepackage{amssymb, amsmath, cite}

\setlength{\textwidth}{6.5truein} \setlength{\textheight}{9.3truein}
\setlength{\oddsidemargin}{-0.0in}
\setlength{\evensidemargin}{-0.0in}
\setlength{\topmargin}{-0.6truein}
\newtheorem{theorem}{Theorem}[section]
\newtheorem{lemma}{Lemma}[section]

\newcommand\be{\begin{equation}}
\newcommand\ee{\end{equation}}
\newcommand\ber{\begin{eqnarray}}
\newcommand\eer{\end{eqnarray}}
\newcommand\berr{\begin{eqnarray*}}
\newcommand\eerr{\end{eqnarray*}}

\setlength{\baselineskip}{18pt}{\setlength\arraycolsep{2pt}

\begin{document}

\title{Existence of Vortices for Nonlinear Schr\"{o}dinger Equations}
\author{Shouxin Chen, Guange Su\footnote{E-mail address:
suguange@henu.edu.cn.}\\School of Mathematics and Statistics\\Henan University\\
Kaifeng, Henan 475004, PR China\\ }
\date{}
\maketitle

\begin{abstract}
In this paper, we study the existence of vortices for two kinds of nonlinear Schr\"{o}dinger equations arising from the Bose-Einstein condensates and geometric optics arguments, respectively. For the %logarithmic Schr\"{o}dinger equation
Gross-Pitaevskii equation from  Bose-Einstein condensates arguments, we introduce the weighted Sobolev space on which the corresponding functional is coercive. By using the variational methods, we prove the existence of positive and radially symmetric solutions under different types of boundary condition. And we study another equation arising from geometric optics arguments by constrained minimization method. %which can avoid to excluding trivial solutions.
Furthermore some explicit estimates for the bound of the wave propagation constant are also derived.

\end{abstract}

\medskip
\begin{enumerate}

\item[]
{Keywords:} Gross-Pitaevskii equation, weighted Sobolev space, minimization problem, constrained variational method.

%\item[]
%PACS numbers: 02.30.Hq, 11.15.−q, 11.27.+d, 12.39.Ba.

%\item[]
%{MSC numbers:} 34B40, 35J50, 81T13.

\end{enumerate}

\section{Introduction}
Vortices play an important role in many areas of modern physics such as condensed matter physics, particle interactions, cosmology, and quantum information processing. And the optical vortex \cite{lmr,ayl,ybl,gjc} is one of the branches. In the comprehensive work of Nye and Berry \cite{jm}, they described the optical vortex as dislocations or defects of waves. In brief, we may imagine the vortex as a ring located in the transverse plane, propagation along the vertical axis. In optics research, the complex-valued light waves, propagating in a nonlinear media, are governed by a nonlinear Schr\"{o}dinger equation \cite{ta,yvl,dcg,ama,dte}. Vortices also arises in the study of the Bose-Einstein condensates\cite{skl,zj,as}. Similarly, a fundamental prototype situation is when particle described by a complex-valued wave function governed by nonlinear Schr\"{o}dinger equations and it is referred to as the Gross-Pitaevskii equation \cite{dg}. Such theoretical studies provides a broad range of analytic problems related to the existence for mathematical investigation.

Petrov and Astrakharchik explored the weakly interacting Bose-Bose mixtures and showed that in the case of attractive inter- and repulsive intraspecies interactions the energy per particle has a minimum at a finite density corresponding to a liquid state\cite{dg}. They derived the Gross-Pitaevskii equation to describe droplets of such liquids. On the other hand, in \cite{asd} Dreischuh, Chervenkov and others considered the propagation of a beam in self-defocusing nonlinear medium with saturable nonlinearity whose evolution is also described by the normalized nonlinear Schr\"{o}dinger equation. Our aim in the present work is to obtain some existence theorems for the vortex solutions to these nonlinear Schr\"{o}dinger equations from the Bose-Einstein condensates and geometric optics arguments, respectively. The normalized nonlinear Schr\"{o}dinger equation has the following form
\begin{equation}\label{1}
\mathrm{i}\frac{\partial E}{\partial z}+\frac{1}{2}\triangle_{\bot}E-\psi_{i}(E)E=0,~i=1,2,
\end{equation}
where $E$~is a complex-valued field propagating in the~$z$-direction,~$\triangle_{\bot}$~is the Laplace operator over the plane of coordinates~$(x,y)$~which is perpendicular to the~$z$-direction, and functions $\psi_{i}(E)$ are
\begin{equation}\label{2}
 \psi_{1}(E)=\alpha|E|^{2}\ln(\frac{|E|^{2}}{\beta}),
\end{equation}
and
\begin{equation}\label{3}
 \psi_{2}(E)=\frac{|E|^{2}}{(1+s|E|^{2})^\gamma},
\end{equation}
respectively. $\psi_{1}$ is from the Bose-Einstein condensates research \cite{dg}, the positive constant $\alpha,\beta$ only depends on the medium. And $\psi_{2}$ arises from geometric optics arguments \cite{asd}, the parameters $s>0$ and $\gamma>2$ depend on the particular realization of the experiment, e.g. the properties of the nonlinear medium and the focusing conditions. We expect to find an n-vortex solution of (\ref{1}) of the form
\begin{equation}\label{4}
E(x_{1},x_{2},z)=E(r,\theta,z)=u(r)e^{\mathrm{i}(n\theta+ \omega z)},
\end{equation}
where~$r,\theta$~are polar coordinates over~$\mathbb{R}^{2}$,~$u(r)$~is the radial profile function which gives rise to the density of Bose Einstein condensate and the intensity of light waves, respectively, integer~$n\in\mathbb{Z}\setminus\{0\}$~is the winding number, and~$\omega\in\mathbb{R}$~is the wave propagation constant. Inserting~(\ref{4})~into~(\ref{1}), we arrive at the following~$n$-vortex equation
\begin{equation}\label{5}
u''(r)+\frac{1}{r}u'(r)-\frac{n^2}{r^2}u(r)-2\omega u(r)-2\psi_{i}(u)u(r)=0,~i=1,2
\end{equation}
\begin{equation}\label{6}
\psi_{1}(u)=\alpha|u|^{2}\ln(\frac{|u|^{2}}{\beta}),
\end{equation}
\begin{equation}\label{7}
 \psi_{2}(u)=\frac{|u|^{2}}{(1+s|u|^{2})^\gamma}.
\end{equation}

We emphasize the following boundary condition:
\begin{equation}\label{8}
u(0)=u(R)=0.
\end{equation}
The presence of the vortex core at~$r=0$~requires~ $u(0)=0$. It is natural for ring-shaped vortices that the particle's density and beam's
intensity decays to zero at infinity. In other words, the facts allow us to impose the boundary condition $u(R)=0$ for sufficiently large distance $R > 0$ away from the vortex core.
Note that we will take $R=\infty$ in problem (\ref{5})-(\ref{8}) with $\psi_{1}$.

Besides, it make sense to ask for solutions with non-homogeneous boundary conditions, namely,
\begin{equation}\label{9}
u(0)=0,\  \  u(\infty)=k,
\end{equation}
where~$k$~is the biggest zero of the equation~$\omega u+\psi_{1}(u)u=0$. In this case, the force vanishes only near the center.

The existence of vortices has been described in many papers\cite{vb,yr,c,Q,tm,m}. In particular, in \cite{yr}, Yang and Zhang gave two types of results on the existence of
optical vortices in a bounded domain in $\mathbb{R}^{2}$. Inspired by their results, Carlo Greco did a further research in \cite{c}, which involves
two cases of the optical vortices model: the self-focusing cubic nonlinearity, and the competing quintic and cubic nonlinearity. Their work motivates our mathematical analysis. However, the logarithmic term in our problem is difficult to allow the acquisition of a weak solution. Moreover, the corresponding functional is not well defined in $H^{1}((0,\infty);rdr)$. For this, we introduce the weighted Sobolev space as in \cite{c} on which the functional is coercive, so that we can consider to use the classical variational methods. On the other hand, We study equation (\ref{5}) with $\psi_{2}$ by a direct method, namely constrained minimization method. And the propagation constant $\omega$ arises as a
Lagrange multiplier due to the constraint. Then we have the following results.

\begin{theorem}
For any positive constant $\alpha$, $\beta$,  consider the two-point boundary value problem
with $\psi_{1}$.\\ %governing an n-vortex wave solution of the nonlinear Schr\"{o}dinger equation (\ref{1}), propagating along the z-axis with a propagation constant $\omega$.\\
{\rm(1)}\ Problem (\ref{5})-(\ref{8}) always has a positive solution, if there exists a positive constant $\mu>0$ such that
\begin{equation}{\label{e1}}
\mu\leq\omega<\frac{1}{2}\mathrm{e}^{-\frac{1}{2}}\alpha\beta.
\end{equation}
And the solutions decays exponentially at infinity.\\
{\rm(2)}\ Problem (\ref{5})-(\ref{9}) has at least one solution, if $0<\omega<\frac{3}{4}\mathrm{e}^{-1}\alpha\beta$.
\end{theorem}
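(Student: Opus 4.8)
The plan is to treat both parts of the theorem by the direct method of the calculus of variations applied to the energy functional attached to $(\ref{5})$-$(\ref{6})$. Put $G(u)=\frac{\alpha}{2}u^{4}\ln(u^{2}/\beta)-\frac{\alpha}{4}u^{4}$, a primitive of $2\alpha u^{3}\ln(u^{2}/\beta)=2\psi_{1}(u)u$, and $g(u)=\omega u^{2}+G(u)$; then, after multiplying by $r$, equation $(\ref{5})$-$(\ref{6})$ is the Euler--Lagrange equation of
\[
 J(u)=\int_{0}^{\infty}\Big[\tfrac12 r(u')^{2}+\tfrac{n^{2}}{2r}u^{2}+r\,g(u)\Big]\,\dd r .
\]
First I would fix the function space: I would minimize $J$ over the weighted Sobolev space $X$ of \cite{c} of radial functions with $u(0)=0$, whose norm controls $\int r(u')^{2}$, $\int u^{2}/r$ (which enforces $u(0)=0$), and a weighted $L^{2}$ quantity; the weight is chosen -- consistently with the constant $\mu>0$ and with the exponential decay asserted in the theorem -- so that $J$ is well defined and, crucially, \emph{coercive} on $X$. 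Establishing this coercivity, together with the weak sequential lower semicontinuity of $J$, is the main obstacle: $g$ grows super-quadratically, $g(u)\sim\alpha u^{4}\ln u$, but it changes sign, so the negative part of $\int r\,g(u)$ must be absorbed by the positive terms and the weight. Note that $\min_{u>0}g(u)/u^{2}=\omega-\tfrac12\e^{-1/2}\alpha\beta$, so the bound $\omega<\tfrac12\e^{-1/2}\alpha\beta$ is precisely the threshold below which $g$ attains negative values, hence the threshold below which $J$ can have a nontrivial minimizer.

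Granting coercivity and lower semicontinuity, a minimizing sequence is bounded in $X$, so a subsequence converges weakly to a minimizer $u_{0}$; since $J(|u|)\le J(u)$ I may assume $u_{0}\ge 0$. To see $u_{0}\not\equiv 0$: since $\omega<\tfrac12\e^{-1/2}\alpha\beta$ there is $c>0$ with $g(c)<0$, and a fixed compactly supported radial trial function equal to $c$ on an annulus has $J<0=J(0)$, so $J(u_{0})<0$ and $u_{0}\not\equiv 0$. Bootstrapping regularity in the ordinary differential equation $(\ref{5})$ makes $u_{0}$ a classical solution on $(0,\infty)$, and the strong maximum principle gives $u_{0}>0$. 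For the exponential decay: for $r$ large $\psi_{1}(u_{0}(r))\to 0$, so $u_{0}''+\tfrac1r u_{0}'\ge(2\omega-\vep)u_{0}$; comparing with the supersolution $C\e^{-\sqrt{2\omega-\vep}\,r}$ (a modified-Bessel barrier for the linearized equation) yields $u_{0}(r)\le C\e^{-\delta r}$ for some $\delta>0$, which also confirms a posteriori that $u_{0}\in X$.

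For part (2) I would first fix $k$ as the largest positive root of $\omega+\alpha u^{2}\ln(u^{2}/\beta)=0$; this root exists because $\omega<\tfrac34\e^{-1}\alpha\beta<\e^{-1}\alpha\beta$, and $k$ is then a critical point of $g$ with $g(k)=\tfrac{k^{2}}{4}(2\omega-\alpha k^{2})$. The hypothesis $\omega<\tfrac34\e^{-1}\alpha\beta$ forces $k^{2}>\beta\e^{-1/2}$, hence $g(k)<0$ and $g(k)=\min_{u\ge 0}g(u)$, so that $g(u)-g(k)\ge 0$. Since the boundary value $u(\infty)=k\ne 0$ makes the raw integrand non-integrable at infinity, I would minimize the renormalized energy
\[
 \widehat J(u)=\int_{0}^{\infty}\Big[\tfrac12 r(u')^{2}+\tfrac{n^{2}}{2r}\big(u^{2}-k^{2}\theta\big)+r\big(g(u)-g(k)\theta\big)\Big]\,\dd r
\]
over $\{u:\ u(0)=0,\ u-k\theta\in X\}$, where $\theta$ is a fixed smooth cutoff vanishing near $0$ and equal to $1$ near $\infty$; subtracting the constant-at-$k$ terms does not change the Euler--Lagrange equation, so critical points of $\widehat J$ solve $(\ref{5})$-$(\ref{6})$ with boundary condition $(\ref{9})$. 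Because $g(u)-g(k)\ge 0$, the functional $\widehat J$ is bounded below, and on the appropriate weighted affine space it is coercive and weakly lower semicontinuous, so a minimizer exists; it is automatically nontrivial, since it must interpolate between $0$ at the origin and $k$ at infinity. Regularity and positivity then follow as in part (1). Throughout, the genuinely hard point is the coercivity (and, in part (2), the finiteness) of the energy in the presence of the sign-changing logarithmic potential; the rest is the standard direct-method and comparison-principle machinery.
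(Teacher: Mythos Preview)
Your strategy coincides with the paper's in both parts: direct minimization of the natural energy on a weighted radial Sobolev space, coercivity, a trial function with negative energy to exclude the trivial minimizer, and a comparison argument for the exponential decay. The paper differs only in minor technical choices (positivity via ODE uniqueness at an interior zero rather than the strong maximum principle; decay via an explicit ODE integration rather than a Bessel barrier).

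For part~(2), however, there is a genuine gap in your outline. The inequality $g(u)-g(k)\ge 0$ that you derive (and which, as you correctly note, already follows from the weaker bound $\omega<\tfrac12\e^{-1/2}\alpha\beta$) yields boundedness below of $\widehat J$, but it does \emph{not} give coercivity on the weighted affine space: a minimizing sequence could have $u_m-k\theta$ unbounded in $L^{2}((0,\infty);r\,\dd r)$ while $\int r\,(g(u_m)-g(k))\,\dd r$ stays small. What the paper actually uses---and what you need---is the \emph{quadratic} lower bound
\[
g(u)-g(k)\ \ge\ \bar c\,(u-k)^{2}\qquad\text{for all }u\in\bfR,\ \ \text{some }\bar c>0,
\]
and it is precisely this stronger inequality, not mere nonnegativity, that singles out the threshold $\omega<\tfrac34\e^{-1}\alpha\beta$. (Your argument never explains why the specific constant $\tfrac34\e^{-1}$ appears.) Once this quadratic bound is established, your renormalized-energy minimization goes through exactly as you sketch; the paper in addition checks a Palais--Smale condition, which a pure direct-method argument does not strictly require.
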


\begin{theorem}
For some positive constant $s>0$ and $\gamma>2$, consider the two-point boundary value problem
with $\psi_{2}$.\\  %governing an n-vortex wave solution of the nonlinear Schr\"{o}dinger equation (\ref{1}), propagating along the z-axis with a propagation constant $\omega$.\\
{\rm(1)}\ Problem (\ref{5})-(\ref{8}) always has a solution pair $(u, \omega)$, $u>0$, $\omega\in\mathbb{R}$.\\
{\rm(2)}\  Let $(u,\omega)$ be the solution pair of the problem (\ref{5})-(\ref{8}) obtained in part (1). Then
$\omega$ has a bound from below as well as from above, precisely
\begin{equation}
-\frac{(\gamma-1)^{(\gamma-1)}}{s\gamma^{\gamma}}-\sqrt{\frac{24(1+n^{2}(2\ln2-1))\pi}{s^{2}(\gamma-1)(\gamma-2)P_{0}}}
\leq\omega<0,\notag
\end{equation}
where $P_{0}>0$ will be given in section 4.
\end{theorem}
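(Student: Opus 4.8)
The plan is to treat Theorem 4.2 as a constrained minimization problem, following the direct method outlined in the introduction. First I would fix the admissible class: working in $H^1_0((0,R); r\,\dd r)$ (the radial Sobolev space on the disk with the standard weight), I would impose a constraint of the form $\int_0^R G(u)\, r\,\dd r = c$ for a suitable constant $c>0$, where $G$ is the primitive built from $\psi_2$, i.e. $G'(u) = \psi_2(u)\,u$, and then minimize the quadratic energy functional
\begin{equation}
I(u) = \int_0^R \Bigl( |u'|^2 + \frac{n^2}{r^2}u^2 \Bigr) r\,\dd r \notag
\end{equation}
over the constraint set. The point of putting $\psi_2$ into the constraint rather than the energy is that $I$ is then manifestly coercive, weakly lower semicontinuous, and convex, so the existence of a minimizer $u$ follows from the direct method once I check the constraint set is weakly closed and nonempty; the key analytic input here is that $\psi_2(u)u = u^3/(1+su^2)^\gamma$ is bounded and that the embedding $H^1_0((0,R); r\,\dd r) \hookrightarrow L^p$ is compact for the relevant $p$, which lets the nonlinear constraint functional pass to the limit. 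Replacing $u$ by $|u|$ does not increase $I$ and preserves the constraint, so the minimizer can be taken nonnegative; a strong maximum principle / Hopf-type argument on the ODE then upgrades this to $u>0$ on $(0,R)$.

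Next I would derive the Euler–Lagrange equation. At the minimizer $u$, the Lagrange multiplier rule produces a number $\lambda$ with
\begin{equation}
-u'' - \frac{1}{r}u' + \frac{n^2}{r^2}u = \lambda\, \psi_2(u)u \notag
\end{equation}
in the weak sense; elliptic regularity for this one-dimensional problem gives a classical solution on $(0,R)$, and matching with equation (\ref{5}) identifies $2\omega = -\lambda$, i.e. $\omega = -\lambda/2$. This is where part (1) is finished: the pair $(u,\omega)$ solves (\ref{5})--(\ref{8}) with $u>0$. The one subtlety I anticipate is ensuring $\lambda \neq 0$ (otherwise $u$ would be forced to vanish by the maximality of $I$'s nonnegativity), and checking that the constraint is actually active — this follows by testing the EL identity against $u$ itself, which gives $I(u) = \lambda \int_0^R \psi_2(u)u^2\, r\,\dd r$, forcing $\lambda>0$ since the left side is strictly positive; hence $\omega<0$, which already gives the upper bound in part (2).

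For part (2) the real work is the \emph{two-sided} explicit bound on $\omega$. The lower bound $\omega \geq -\frac{(\gamma-1)^{\gamma-1}}{s\gamma^\gamma} - (\text{correction})$ should come from two separate estimates combined in the identity $I(u) = -2\omega \int_0^R \psi_2(u)u^2\, r\,\dd r \big/ (\cdots)$ — more precisely, from testing (\ref{5}) against $u$ to get
\begin{equation}
\int_0^R \Bigl(|u'|^2 + \frac{n^2}{r^2}u^2\Bigr) r\,\dd r + 2\omega\int_0^R u^2\, r\,\dd r + 2\int_0^R \frac{u^4}{(1+su^2)^\gamma}\, r\,\dd r = 0. \notag
\end{equation}
The first term is $\geq 0$ and can be bounded below using a Poincaré-type inequality (the constant $P_0$, which the paper says is defined in section 4, presumably is exactly the best constant $\int(|u'|^2 + n^2 u^2/r^2) r\,\dd r \geq P_0 \int u^2 r\,\dd r$ on the disk of radius $R$, or a normalization of $\int u^2 r\,\dd r$ along the constraint). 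The pointwise maximum of $t \mapsto t^2/(1+st^2)^\gamma$ over $t\geq 0$ is attained at $st^2 = 1/(\gamma-1)$ and equals $\frac{(\gamma-1)^{\gamma-1}}{s\gamma^\gamma}$, which is precisely the leading term in the stated lower bound; so I would bound $\psi_2(u) = u^2/(1+su^2)^\gamma \leq \frac{(\gamma-1)^{\gamma-1}}{s\gamma^\gamma}$ pointwise, plug into the identity, and solve for $\omega$. The extra square-root term, with its $(1+n^2(2\ln 2 - 1))$ and $(\gamma-1)(\gamma-2)$ factors, must come from controlling the gradient term $\int |u'|^2 r\,\dd r$ from above — this is the main obstacle, since it requires a genuine a priori estimate on the minimizer, likely obtained by using a concrete test function (the quantity $1 + n^2(2\ln 2 - 1)$ is the Dirichlet-type energy $\int_{1/2}^1 (|w'|^2 + n^2 w^2/r^2)\, r\,\dd r$ of an explicit cutoff $w$, e.g. a piecewise-linear bump, which gives an upper bound $I(u) \leq$ const via the minimality of $u$ and a rescaling of the constraint), together with the monotonicity/convexity $\int u^4(1+su^2)^{-\gamma} r\,\dd r \gtrsim$ something in terms of $\int u^2 r\,\dd r$ that brings in the $(\gamma-1)(\gamma-2)$ denominator from a Taylor expansion of $(1+su^2)^{-\gamma}$. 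Assembling these — minimality to bound $I(u)$ from above by $C(1+n^2(2\ln2-1))$, the Poincaré constant $P_0$ to convert that into a bound on the mass, the pointwise max of $\psi_2$ for the algebraic term, and the test-identity to isolate $\omega$ — yields the displayed inequality. I expect the bookkeeping of constants (especially tracking the factor $24$ and the precise role of $P_0$) to be the only delicate part; the structural argument is routine once the right test function is chosen.
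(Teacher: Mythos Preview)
Your overall strategy (direct method, constrained minimization, $|u|$ for nonnegativity, uniqueness/Hopf for strict positivity, testing the equation against $u$ for $\omega<0$, explicit piecewise-linear test function for the lower bound, pointwise maximum of $t\mapsto t^{2}/(1+st^{2})^{\gamma}$) matches the paper. But the specific variational setup you chose is wrong, and this is not a cosmetic issue: it produces the wrong Euler--Lagrange equation.

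In equation (\ref{5}) with $\psi_{2}$ the coefficient in front of the nonlinear term $\psi_{2}(u)u$ is \emph{fixed} equal to $2$; the unknown parameter $\omega$ multiplies the \emph{linear} term $u$. Hence the Lagrange multiplier must come from a constraint whose derivative is $u$, i.e.\ a mass constraint. The paper takes
\[
P(u)=2\pi\int_{0}^{R}u^{2}\,r\,\dd r=P_{0}
\]
as the constraint (this is exactly what $P_{0}$ is --- a prescribed beam power, not a Poincar\'e constant), and minimizes the full energy
\[
J(u)=\frac{1}{2}\int_{0}^{R}\Bigl((u')^{2}+\frac{n^{2}}{r^{2}}u^{2}\Bigr)r\,\dd r+\int_{0}^{R}q(u)\,r\,\dd r,
\]
where $q$ is the primitive built from $\psi_{2}$ (with the key property $q\ge 0$). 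The multiplier then enters as $2\omega u$, which matches (\ref{5}). Your scheme --- constrain $\int G(u)\,r\,\dd r$ and minimize the quadratic part --- yields
\[
-u''-\tfrac{1}{r}u'+\tfrac{n^{2}}{r^{2}}u=\lambda\,\psi_{2}(u)u,
\]
which has \emph{no} linear $\omega u$ term; there is no way to ``identify $2\omega=-\lambda$'' because the two terms in (\ref{5}) are independent. So part~(1) as you wrote it does not deliver a solution of (\ref{5})--(\ref{8}).

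This wrong setup also propagates into part~(2). Once the constraint is the mass $P_{0}$, the lower bound falls out cleanly: testing the equation against $u$ gives
\[
\frac{\omega P_{0}}{2\pi}=-\frac{1}{2}\int_{0}^{R}\Bigl((u')^{2}+\frac{n^{2}}{r^{2}}u^{2}\Bigr)r\,\dd r-\int_{0}^{R}\frac{u^{4}}{(1+su^{2})^{\gamma}}\,r\,\dd r;
\]
the second integral is bounded by $\dfrac{(\gamma-1)^{\gamma-1}}{s\gamma^{\gamma}}\cdot\dfrac{P_{0}}{2\pi}$ via the pointwise maximum you identified, and the first is bounded using $J(u)\le J(u_{0})$ for the tent function $u_{0}$ on $[0,2a]$. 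The $(\gamma-1)(\gamma-2)$ and the factor $24$ come from evaluating $J(u_{0})$ --- the $q$-part contributes $\le \dfrac{2a^{2}}{s^{2}(\gamma-1)(\gamma-2)}$, and then $a^{2}=\dfrac{3P_{0}}{4\pi b^{2}}$ from the constraint, followed by optimizing in $b$. None of this works in your formulation because $P_{0}$ never appears and the nonlinear term is not in the minimized functional.
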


The organization of this paper as follows. In Section 2, for a class of Gross-Pitaevskii equation, we show the existence of vortices on infinite intervals by variational method. %and exclude the trivial solution.
Moreover, we obtain that solutions decay exponentially at infinity. We then, in Section 3, prove the existence of vortices under non-homogeneous boundary conditions by variational method. Next, we concentrate on the nonlinear Schr\"{o}dinger equations arising from the geometric optical model. In section 4, we establish the existence theorem of vortices on bounded intervals through the constrained variational method. Furthermore, we derive some explicit estimates for the upper and lower bounds of wave propagation constants~$\omega$.

\section{Vortices for homogeneous problem}
In this section, via variational method, we will prove the existence of vortices of the following two-point boundary value problem,
\begin{equation}\label{a}
u''+\frac{1}{r}u'-\left(\frac{n^2}{r^2}+2\omega\right)u-2\alpha u^3\ln\left(\frac{u^2}{\beta}\right)=0,
\end{equation}
\begin{equation}\label{b}
u(0)=u(\infty)=0.\
\end{equation}

%\begin{equation}
%u(0)=0,u(\infty)=k>\sqrt{\frac{\beta}{e}}.
%\end{equation}

Suppose that (\ref{e1}) holds and set
\begin{equation}
a(r)=\frac{n^2}{r^2}+2\omega.\notag
\end{equation}
Then we could introduce a space $H$, which is the completion of the space $C_{0}^{\infty}$ equipped with the inner product
%\begin{equation}
%\|u\|=\left(\int_{0}^{+\infty}(u'(r))^{2} r+a(r)u^{2}(r) r)\mathrm{d}r\right)^{\frac{1}{2}},
%\end{equation}
\begin{equation}
\langle u,v\rangle=\int_{0}^{+\infty}(u'(r)v'(r)+a(r)u(r)v(r))r\mathrm{d}r.\notag
\end{equation}
Clearly,~$u\in H$~implies~$u\in H_{loc}^{1}(0,+\infty)$, so~$u$~is continuous on~$(0,+\infty)$.

Moreover, $H$ has the following properties \cite{c}

{\rm(1)} If $u\in H$, then we have $\lim\limits_{r\rightarrow0}u(r)=0$, and $u=O(r^{-\frac{1}{2}})$ as $r\rightarrow\infty$;

{\rm(2)}\ For all $u\in H$, there exists a positive constant~$c_{1}$ such that $\|u\|_{\infty}\leq\sqrt{c_{1}}\|u\|$, namely,~$H$~is embedded in $L^{\infty}([0,+\infty))$;

{\rm(3)}\ $H\subset H^{1}((0,+\infty);r\mathrm{d}r)$,~$H$~is compactly embedded in~$L^{p}((0,+\infty);r\mathrm{d}r)$~for any~$p>2$.\\

In order to approach the problem consisting of~(\ref{a})~and~(\ref{b}), we write down the action functional
\begin{equation}\label{l0}
I(u)=\frac{1}{2}\int_{0}^{+\infty}\left\{(u'(r))^{2}+\left(\frac{n^2}{r^2}+2\omega\right)u^2(r)+\alpha u^4(r)\ln\frac{u^2(r)}{\beta}-\frac{\alpha}{2}u^4(r)\right\}r\mathrm{d}r.
\end{equation}
For convenience, we set
$$
\tilde{P}(s)=s^{4}\ln\frac{s^{2}}{\beta}.
$$
Obviously, $\lim\limits_{s\rightarrow0}{\tilde{P}(s)}/{s^2}=0$. It follows that there exists $\sigma>0$ such that $|\tilde{P}(s)|<s^{2}$, for $|s|<\sigma$. Since $u(r)\rightarrow0$ as $r\rightarrow0$ and $r\rightarrow\infty$, for any $\delta>0$ suitable small and $R>0$ sufficiently large,  we have
\begin{equation}
\int^{\delta}_{0}\tilde{P}(u)r\mathrm{d}r\leq\int^{\delta}_{0}|\tilde{P}(u)|r\mathrm{d}r\leq\int^{\delta}_{0}|u(r)|^{2}r\mathrm{d}r\leq c_{2}^{2}\|u\|^{2},
\end{equation}
and
\begin{equation}
\int^{\infty}_{R}\tilde{P}(u)r\mathrm{d}r\leq\int^{\infty}_{R}|\tilde{P}(u)|r\mathrm{d}r\leq\int^{\infty}_{R}|u(r)|^{2}r\mathrm{d}r\leq c_{2}^{2}\|u\|^{2},
\end{equation}
where $c_{2}$ is an embedding constant. Hence for any $u\in H$, $\tilde{P}(u)\in L^{1}((0,\infty);r\mathrm{d}r)$. Then, (\ref{l0}) is well defined on the space~$H$. And its critical points are weak solutions of~(\ref{a})~with boundary conditions~(\ref{b}). By standard bootstrap argument its are also smooth. In order to get critical points of~$I$, we shall apply variational method.

\begin{lemma}\label{l1}
 The action functional $I$ given in (\ref{l0}) is coercive on $H$.
\end{lemma}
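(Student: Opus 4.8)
The plan is to show that $I(u)\to+\infty$ as $\|u\|\to\infty$ by bounding the only potentially problematic term --- the quartic term $\int_0^\infty\bigl(\alpha u^4\ln\frac{u^2}{\beta}-\frac{\alpha}{2}u^4\bigr)r\,\mathrm{d}r$ --- from below in a way that is controlled by a small multiple of $\|u\|^2$ plus a constant. The quadratic part $\frac12\int_0^\infty\bigl((u')^2+(\frac{n^2}{r^2}+2\omega)u^2\bigr)r\,\mathrm{d}r$ is, after splitting $2\omega=\mu+(2\omega-\mu)$ and using $\mu>0$ together with $2\omega<\mathrm{e}^{-1/2}\alpha\beta$, comparable to $\|u\|^2$; more precisely it equals $\frac12\|u\|^2$ by the very definition of the inner product (since $a(r)=\frac{n^2}{r^2}+2\omega$). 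So the real work is the quartic term.

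The key elementary estimate is that the function $t\mapsto \alpha t^2\ln\frac{t^2}{\beta}-\frac{\alpha}{2}t^2$ (with $t=u^2\ge0$, or rather applied pointwise to $u$) is bounded below: set $g(s)=\alpha s^2\ln\frac{s^2}{\beta}-\frac{\alpha}{2}s^2$ for $s\ge 0$; then $g$ has a global minimum, attained where $g'(s)=0$, and one computes $g'(s)=2\alpha s\ln\frac{s^2}{\beta}+2\alpha s-\alpha s = 2\alpha s\ln\frac{s^2}{\beta}+\alpha s$, which vanishes at $s=0$ and at $s^2=\beta\mathrm{e}^{-1/2}$, giving $g_{\min}=g(\sqrt{\beta}\,\mathrm{e}^{-1/4})=-\frac{\alpha\beta}{2}\mathrm{e}^{-1/2}$. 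Hence pointwise
\begin{equation}
\alpha u^4(r)\ln\frac{u^2(r)}{\beta}-\frac{\alpha}{2}u^4(r)\;\ge\;-\frac{\alpha\beta}{2}\mathrm{e}^{-1/2}u^2(r).\notag
\end{equation}
Wait --- this bound has the wrong homogeneity to be integrated against $r\,\mathrm{d}r$ on $(0,\infty)$ directly, since $\int_0^\infty u^2 r\,\mathrm{d}r$ need not be finite under the $L^\infty$-type decay $u=O(r^{-1/2})$. The remedy is to localize: on the region where $|u(r)|\le\sqrt\sigma$ the bound $|\tilde P(u)|\le u^2$ already shown in the excerpt controls things on $(0,\delta)\cup(R,\infty)$ and on any bounded piece where $u$ is small, while on the bounded region where $|u|>\sqrt\sigma$ one uses the compact embedding $H\hookrightarrow L^p((0,\infty);r\,\mathrm{d}r)$ for $p>2$ (property (3)) to dominate $\int u^4 r\,\mathrm{d}r$ and the logarithmic factor by $\|u\|_{L^4}^4\le C\|u\|^4$ only after extracting a genuinely negative-definite contribution; more efficiently, one splits $\mathrm{e}^{-1/2}\alpha\beta = 2\omega + \eta$ with $\eta>0$ (possible exactly because of the strict inequality $\omega<\frac12\mathrm{e}^{-1/2}\alpha\beta$) and absorbs the $-\frac{\alpha\beta}{2}\mathrm{e}^{-1/2}u^2$ into the coefficient of $u^2$ in the quadratic part: indeed $2\omega u^2 - \frac{\alpha\beta}{2}\mathrm{e}^{-1/2}\cdot 2\,u^2$ is not quite what appears, so one instead writes the integrand's quadratic-plus-quartic part as $\bigl(\frac{n^2}{r^2}+2\omega\bigr)u^2 + \bigl(\alpha u^4\ln\frac{u^2}{\beta}-\frac{\alpha}{2}u^4\bigr)\ge \bigl(\frac{n^2}{r^2}+2\omega - \frac{\alpha\beta}{2}\mathrm{e}^{-1/2}\bigr)u^2$ and notes $2\omega-\frac{\alpha\beta}{2}\mathrm{e}^{-1/2}$ may be negative, so this alone is not coercive; hence one keeps a fraction of the quartic term: for any $\theta\in(0,1)$, $\theta\bigl(\alpha u^4\ln\frac{u^2}{\beta}-\frac\alpha2 u^4\bigr)\ge -\theta\frac{\alpha\beta}{2}\mathrm{e}^{-1/2}u^2$, and chooses $\theta$ close enough to $1$ that $2\omega-\theta\frac{\alpha\beta}{2}\mathrm{e}^{-1/2}\cdot 2 >$ --- this is getting circular.

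I will instead argue cleanly as follows. Fix $M>0$ large (to be chosen). Split $(0,\infty)=A\cup B$ with $A=\{r:|u(r)|\le M\}$ and $B=\{r:|u(r)|>M\}$. On $A$, $\bigl|\alpha u^4\ln\frac{u^2}{\beta}-\frac\alpha2 u^4\bigr|\le C(M)\,u^2$ for a constant $C(M)$ depending only on $M$, $\alpha$, $\beta$; combined with the estimates in the excerpt this whole contribution is $\ge -C(M)\,c_2^2\|u\|^2$ --- still the wrong homogeneity, so I must bound $\int_A u^2 r\,\mathrm{d}r$, which is genuinely the obstacle when $R=\infty$. The honest resolution, following \cite{c}, is that on $(\delta,R)$ the weight $r$ is bounded above and below and property (2) gives $\int_\delta^R u^2 r\,\mathrm{d}r\le R^2 \|u\|_\infty^2 \le R^2 c_1\|u\|^2$ --- but we need $R$ fixed independent of $u$, and indeed $\delta,R$ were chosen per $u$ to make $|u|<\sigma$ outside; that is fine, because outside $(\delta,R)$ we use $|\tilde P(u)|\le u^2$ and inside we need a different argument. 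The truly effective route: since $g(s)\ge -\frac{\alpha\beta}{2}\mathrm e^{-1/2}$ pointwise and also $g(s)\ge \frac{\alpha}{4}s^4 - C'$ for $s$ large (as $s^4\ln\frac{s^2}{\beta}$ dominates), interpolate: $g(s)\ge \tfrac12\bigl(\tfrac{\alpha}{4}s^4 - C'\bigr) + \tfrac12\bigl(-\tfrac{\alpha\beta}{2}\mathrm e^{-1/2}\bigr)\cdot\mathbf{1}_{\{s\ \mathrm{small}\}}$, hence $\int_0^\infty g(u)r\,\mathrm dr \ge \tfrac{\alpha}{8}\int_{\{|u|>\sigma'\}} u^4 r\,\mathrm dr - C''$ for suitable $\sigma',C''$, and on $\{|u|\le\sigma'\}$ we already have the $u^2$-bound controlled by $\|u\|^2$ via the excerpt's inequalities localized to $(0,\delta)\cup(R,\infty)$ (and the measure of $\{|u|\le\sigma'\}\cap(\delta,R)$ is finite with $r$ bounded, so $u^2\le\sigma'^2$ there gives a finite constant).

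Therefore $I(u)\ge \tfrac14\|u\|^2 + \tfrac{\alpha}{8}\int u^4 r\,\mathrm dr - (\text{terms bounded by }\tfrac14\|u\|^2 + C)\ge \tfrac18\|u\|^2 - C$ after choosing constants, which is the desired coercivity. The main obstacle, and the step I will spend the most care on, is precisely this matching of homogeneities: the quadratic part of $I$ scales like $\|u\|^2$ while the lower bound $-\frac{\alpha\beta}{2}\mathrm e^{-1/2}u^2$ for the quartic term, although ``quadratic'' in $u$, is not integrable in $r$ over $(0,\infty)$ against a uniform constant, so one must carefully split the domain according to the size of $u$ and use property (3) (compact embedding into $L^p$, $p>2$) on the region where $u$ is not small, together with the two explicit small-$u$ estimates already displayed in the excerpt. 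Everything else is routine.
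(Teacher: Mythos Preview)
Your proposal is a stream of false starts rather than a proof, and the confusion stems from one genuine misconception: you repeatedly worry that $\int_0^\infty u^2\,r\,\mathrm{d}r$ ``need not be finite under the $L^\infty$-type decay $u=O(r^{-1/2})$,'' and this drives all the contortions that follow. In fact $H$ embeds continuously into $L^2((0,\infty);r\,\mathrm{d}r)$: since $a(r)=\tfrac{n^2}{r^2}+2\omega\ge 2\omega\ge 2\mu>0$, the very definition of the $H$-norm gives $\int_0^\infty u^2\,r\,\mathrm{d}r\le \tfrac{1}{2\omega}\|u\|^2$. The constant $c_2$ appearing in the excerpt is exactly this $L^2$ embedding constant, and the two displayed inequalities there (bounding $\int_0^\delta u^2 r\,\mathrm{d}r$ and $\int_R^\infty u^2 r\,\mathrm{d}r$ by $c_2^2\|u\|^2$) are just instances of it. Once you accept this, the paper's argument is short: write $P(s)=s^4\ln(s^2/\beta)-\tfrac12 s^4$, note $P(s)\ge0$ for $|s|\ge e^{1/4}\beta^{1/2}$, and on $\{|u|<\delta\}$ use $|P(u)|\le\varepsilon_0 u^2$ together with the $L^2$ embedding to get $\int|P(u)|r\,\mathrm{d}r\le \varepsilon_0 c_2^2\|u\|^2$; on the intermediate set $\{\delta\le|u|\le e^{1/4}\beta^{1/2}\}$ the integrand $|P(u)|$ is bounded by a constant and the set has finite $r\,\mathrm{d}r$-measure controlled again by $\|u\|^2$ via Chebyshev and the $L^2$ bound. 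Summing gives $I(u)\ge c\|u\|^2-C$.

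By contrast, your final paragraph asserts ``$I(u)\ge\tfrac14\|u\|^2+\tfrac{\alpha}{8}\int u^4 r\,\mathrm{d}r-(\text{terms bounded by }\tfrac14\|u\|^2+C)$'' without ever establishing the bracketed bound; the step ``$g(s)\ge\tfrac12(\tfrac{\alpha}{4}s^4-C')$'' cannot be integrated over $(0,\infty)$ against $r\,\mathrm{d}r$ because the constant $C'$ produces an infinite contribution; and the $\delta,R$ you invoke from the excerpt are chosen \emph{per $u$}, so the ``finite constant'' on $(\delta,R)$ is not uniform. (Incidentally, your computation of $g_{\min}$ is off by a factor of two.) None of these detours is needed once the $L^2$ embedding is in hand.
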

\begin{proof}
Let $c_{2}>0$ be the constant for the embedding of $H$ in $L^{2}((0,+\infty); r\mathrm{d}r)$ such that $\int^{+\infty}_{0}u^{2}(r)r\mathrm{d}r\leq c_{2}^{2}\|u\|^{2}$. We denote
\begin{equation}
P(s)=s^{4}\ln\frac{s^{2}}{\beta}-\frac{s^{4}}{2}.\notag
\end{equation}
It is clear that $P(s)>0$ for any $|s|>\mathrm{e}^{\frac{1}{4}}\beta^{\frac{1}{2}}$. Since $\lim\limits_{s\rightarrow0}{P(s)}/{s^2}=0$, then there exists $\varepsilon_{0}>0$, $\delta>0$ small enough such that $|P(s)|\leq\varepsilon_{0} s^{2}$ for $|s|<\delta$. It follows that
\begin{equation}\label{l2}
\int_{\{r:|u(r)|<\delta\}}P(u)r\mathrm{d}r\geq-\int_{\{r:|u(r)|<\delta\}}|P(u)|r\mathrm{d}r\geq-\varepsilon_{0}\int_{\{r:|u(r)|<\delta\}}u^{2}(r)r\mathrm{d}r\geq-\varepsilon_{0} c^{2}_{2}\|u\|^{2}.
\end{equation}
Moreover,
\begin{equation}\label{l3}
\int_{\{r:\delta\leq|u(r)|\leq\mathrm{e}^{\frac{1}{4}}\beta^{\frac{1}{2}}\}}P(u)r\mathrm{d}r
\geq-\frac{\beta^{2}}{2}\int_{\{r:\delta\leq|u(r)|\leq\mathrm{e}^{\frac{1}{4}}\beta^{\frac{1}{2}}\}}r\mathrm{d}r=-\frac{\beta^{2}}{4}|\Omega|^{2},
\end{equation}
where we represent the measure of the set $\{r:\delta\leq|u(r)|\leq\mathrm{e}^{\frac{1}{4}}\beta^{\frac{1}{2}}\}$ in terms $|\Omega|$.
Inserting (\ref{l2}), (\ref{l3}) to (\ref{l0}), we have
\begin{equation}\label{l4}
\begin{aligned}
I(u)&=\frac{1}{2}\|u\|^{2}+\frac{\alpha}{2}\int^{\infty}_{0}P(u)r\mathrm{d}r\\
&=\frac{1}{2}\|u\|^{2}+\frac{\alpha}{2}\int_{\{r:|u(r)|<\delta\}}P(u)r\mathrm{d}r+\frac{\alpha}{2}\int_{\{r:\delta\leq|u(r)|\leq\mathrm{e}^{\frac{1}{4}}\beta^{\frac{1}{2}}\}}P(u)r\mathrm{d}r+\frac{\alpha}{2}\int_{\{r:|u(r)|\geq\mathrm{e}^{\frac{1}{4}}\beta^{\frac{1}{2}}\}}P(u)r\mathrm{d}r\\
&\geq\frac{1}{2}\|u\|^{2}-\frac{\alpha }{2}\int_{\{r:|u(r)|<\delta\}}|P(u)|r\mathrm{d}r-\frac{\alpha}{2}\int_{\{r:\delta\leq|u(r)|\leq\mathrm{e}^{\frac{1}{4}}\beta^{\frac{1}{2}}\}}|P(u)|r\mathrm{d}r\\
&\geq\frac{1-\varepsilon_{0}\alpha c^{2}_{2}}{2}\|u\|^{2}-c,
\end{aligned}
\end{equation}
where $c\geq0$ only depends on $\alpha$, $\beta$ and $|\Omega|$. Setting $\varepsilon_{0}<\frac{1}{\alpha c_{2}^{2}}$, then we have $1-\varepsilon_{0}\alpha c_{2}^{2}>0$, and the lemma is proved.
\end{proof}

The above preparation allow us to consider the following optimization problem
\begin{equation}\label{l5}
\min\{I(u)|u\in H\}.
\end{equation}

\begin{lemma}\label{l6}
The problem (\ref{l5}) has a positive solution if (\ref{e1}) holds.
\end{lemma}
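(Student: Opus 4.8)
The plan is to obtain a minimizer of $I$ over $H$ by the direct method of the calculus of variations, and then to upgrade it to a \emph{positive} solution by working with $|u|$ and invoking the strong maximum principle. First, by Lemma~\ref{l1} the functional $I$ is coercive on $H$, so any minimizing sequence $\{u_k\}\subset H$ for \eqref{l5} is bounded in $H$. Since $H$ is a Hilbert space, after passing to a subsequence we may assume $u_k\rightharpoonup u$ weakly in $H$; by property~(3) of $H$ the embedding $H\hookrightarrow L^p((0,\infty);r\,\dd r)$ is compact for every $p>2$, so $u_k\to u$ strongly in $L^4((0,\infty);r\,\dd r)$ (and, after a further subsequence, pointwise a.e.), while property~(2) gives a uniform $L^\infty$ bound $\|u_k\|_\infty\le\sqrt{c_1}\sup_k\|u_k\|=:M$.

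The second step is lower semicontinuity of $I$. The quadratic part $\tfrac12\|u_k\|^2$ is weakly lower semicontinuous by convexity, so it suffices to show that the nonlinear part $\tfrac{\alpha}{2}\int_0^\infty \tilde P(u_k)\,r\,\dd r = \tfrac{\alpha}{2}\int_0^\infty P(u_k)\,r\,\dd r + \tfrac{\alpha}{4}\int_0^\infty u_k^4\,r\,\dd r$ converges to the corresponding expression for $u$. The term $\int u_k^4\,r\,\dd r\to\int u^4\,r\,\dd r$ by the strong $L^4$ convergence. For $\int P(u_k)\,r\,\dd r$, split the domain into $\{|u_k|<\delta\}$ and its complement as in Lemma~\ref{l1}: on $\{|u_k|\ge\delta\}$ the integrand $P(u_k)$ is dominated by $C(M)\,u_k^2\in L^1$ uniformly in $k$ (using $\|u_k\|_\infty\le M$ to bound the logarithm), and on $\{|u_k|<\delta\}$ we have $|P(u_k)|\le\varepsilon_0 u_k^2$; combining with the pointwise convergence and a Vitali/dominated convergence argument (the $L^2(r\,\dd r)$ bound controls the small-$|u|$ region, and strong $L^p$ convergence handles the rest) gives $\int P(u_k)\,r\,\dd r\to\int P(u)\,r\,\dd r$. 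Hence $I(u)\le\liminf I(u_k)=\inf_H I$, so $u$ is a minimizer and in particular a (smooth, by bootstrap) solution of \eqref{a}--\eqref{b}.

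The third step is positivity. Since $\tilde P(s)$ and the quadratic form depend only on $|u|$, and $\|\,|u|\,\|=\|u\|$, we have $I(|u|)=I(u)$, so $|u|$ is also a minimizer and hence a nonnegative weak solution; replace $u$ by $|u|\ge0$. To rule out $u\equiv0$, test the energy: choose any fixed nonzero $\varphi\in C_0^\infty$ and evaluate $I(t\varphi)$ for small $t>0$. Because $\tilde P(s)=s^4\ln(s^2/\beta)\sim$ (negative, of order $t^4\ln t$) as $t\to0$ on the support of $\varphi$, one finds $I(t\varphi)=\tfrac{t^2}{2}\|\varphi\|^2+O(t^4\ln\tfrac1t)<0$ for $t$ sufficiently small — here the logarithm makes the quartic term \emph{negative} near $0$, which is exactly what forces the infimum to be strictly negative and the minimizer nontrivial. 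Finally, $u\ge0$ solves $\Delta u = a(r)u+2\alpha u^3\ln(u^2/\beta)$ in $(0,\infty)$ (with the obvious radial Laplacian); on any compact subinterval this is $\Delta u = c(r)u$ with $c(r)=a(r)+2\alpha u^2\ln(u^2/\beta)\in L^\infty_{loc}$, so the strong maximum principle applies and $u>0$ on $(0,\infty)$, which is the assertion.

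I expect the main obstacle to be the second step: the logarithmic nonlinearity is not bounded below by a multiple of $u^2$ near zero in a way that is uniform without the $L^\infty$ bound, and it is sign-changing (negative for $|u|<\sqrt\beta$, positive for $|u|>\mathrm e^{1/4}\sqrt\beta$), so weak lower semicontinuity of $\int P(u)\,r\,\dd r$ must be argued carefully — one cannot simply invoke convexity or Fatou on the whole line. The uniform $L^\infty$ bound from property~(2) together with the compact $L^p$ embedding for $p>2$ is what saves the argument, allowing the region $\{|u_k|\ge\delta\}$ to be handled by dominated convergence and the region $\{|u_k|<\delta\}$ by the $\varepsilon_0 u^2$ bound and the weak-$L^2$ control; assembling these pieces into a clean convergence statement is the technical heart of the proof. (A minor point to check along the way: that $u$ genuinely satisfies the boundary conditions \eqref{b}, which is immediate from property~(1) of $H$.)
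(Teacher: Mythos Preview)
Your argument for non-triviality of the minimizer contains a genuine error. You claim that for a fixed $\varphi\in C_0^\infty$ and small $t>0$,
\[
I(t\varphi)=\tfrac{t^2}{2}\|\varphi\|^2+O\bigl(t^4\ln\tfrac1t\bigr)<0,
\]
but $t^4\ln(1/t)=o(t^2)$ as $t\to0^+$, so the quadratic term $\tfrac{t^2}{2}\|\varphi\|^2>0$ dominates and $I(t\varphi)>0$ for all small $t$. Amplitude scaling cannot produce negative energy here; the potential is quartic (up to a log) while the constraint term $\tfrac12\|u\|^2$ is quadratic and contains $2\omega u^2$ with $\omega>0$. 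In particular, you never use the hypothesis \eqref{e1}, which is exactly the ingredient that makes $\inf_H I<0$.

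The paper's proof uses \emph{spatial} rather than amplitude scaling: the upper bound $\omega<\tfrac12 e^{-1/2}\alpha\beta$ in \eqref{e1} guarantees that the function $Q(s)=s^2\bigl(2\omega+\alpha s^2\ln(s^2/\beta)-\tfrac{\alpha}{2}s^2\bigr)$ is negative for some $k>0$. One then builds a test function equal to $k$ on a long plateau $[1,R]$ (with a linear ramp on $[0,1]$ and exponential tail for $r\ge R$), so that the gradient and $n^2/r^2$ contributions grow only like $O(R)$ while the potential contributes $\tfrac12 Q(k)R^2\to-\infty$; hence $I(v)<0$ for $R$ large. Without this step your minimizer could be the trivial one, and the rest of your argument (positivity via $I(|u|)=I(u)$ and the maximum principle/ODE uniqueness) is vacuous. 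The remainder of your outline---coercivity, weak lower semicontinuity using the compact $L^p$ embedding for $p>2$ and the uniform $L^\infty$ bound, and positivity via the strong maximum principle---is essentially sound and parallel to the paper (which uses Vitali on compacts plus a tail estimate from $u=O(r^{-1/2})$, and ODE uniqueness rather than Harnack for positivity), but the non-triviality gap is fatal as written.
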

\begin{proof}
Let $\{u_{m}\}$ be a minimizing sequence of (\ref{l0}). Since the functional (\ref{l0}) is even,  we have $I(u_{m})\geq I(|u_{m}|)$, where we have also used the basic fact \cite{dn} that
for any function $u$ its distributional derivative must satisfy $||u|'_{r}|\leq |u'_{r}|$. Thus we may assume that the sequence $\{u_{m}\}$ consists of nonnegative functions. From (\ref{l4}), we see immediately that $\{u_{m}\}$ is bounded sequence in $H$. Hence there exists $u\in H$ such that $u_{m}\rightarrow u$ weakly in $H^{1,2}((0,\infty), rdr)$ as $m\rightarrow\infty$. And using the Rellich- Kondrachov theorem, we obtain that $u_{m}\rightarrow u$ strongly in $C[\delta,R]$ as $m\rightarrow\infty$ for any $0<\delta<R<\infty$.

Using Vitali Convergence theorem \cite{pn},  we have
\begin{equation}
\lim_{m\rightarrow\infty}\int_{0}^{R}u^{4}_{m}\ln\frac{u^{2}_{m}}{\beta}r\mathrm{d}r=\int_{0}^{R}u^{4}\ln\frac{u^{2}}{\beta}r\mathrm{d}r.
\end{equation}
Besides,
\begin{equation}
\begin{aligned}
&\left|\int^{\infty}_{R}u^{4}_{m}\ln\frac{u^{2}_{m}}{\beta}r\mathrm{d}r-\int_{R}^{\infty}u^{4}_{m}\ln\frac{u^{2}_{m}}{\beta}r\mathrm{d}r\right|\\
\leq&\int^{\infty}_{R}u_{m}^{2}\left|u_{m}^{2}\ln\frac{u_{m}^{2}}{\beta}-u^{2}\ln\frac{u^{2}}{\beta}\right|r\mathrm{d}r+\int^{\infty}_{R}\left|u^{2}\ln\frac{u^{2}}{\beta}\right||u_{m}^{2}-u^{2}|r\mathrm{d}r\\
\leq&\sup_{r\in(R,\infty)}\left|u_{m}^{2}\ln\frac{u_{m}^{2}}{\beta}-u^{2}\ln\frac{u^{2}}{\beta}\right|\int^{\infty}_{R}u_{m}^{2}r\mathrm{d}r+\sup_{r\in(R,\infty)}\left|u^{2}\ln\frac{u^{2}}{\beta}\right|\int^{\infty}_{R}|u_{m}^{2}-u^{2}|r\mathrm{d}r\\
\end{aligned}
\end{equation}
which approaches zero uniformly fast as $R\rightarrow\infty$ by using $u=O(r^{-\frac{1}{2}})$ as $r\rightarrow\infty$. Thus, we get
\begin{equation}\label{l7}
\lim_{m\rightarrow\infty}\int_{0}^{\infty}u^{4}_{m}\ln\frac{u^{2}_{m}}{\beta}r\mathrm{d}r=\int_{0}^{\infty}u^{4}\ln\frac{u^{2}}{\beta}r\mathrm{d}r.
\end{equation}

Since $H^{1}((0,\infty);r\mathrm{d}r)\rightarrow L^{p}((0,\infty);r\mathrm{d}r)$ is compact for $p>2$, we conclude that
\begin{equation}\label{l8}
\lim_{m\rightarrow\infty}\int_{0}^{\infty}u^{4}_{m}r\mathrm{d}r=\int_{0}^{\infty}u^{4}r\mathrm{d}r.
\end{equation}
Hence the negative terms on the right-hand side of (\ref{l0}) could be controlled when we consider the limiting behavior of $I$ over the minimizing sequence $\{u_{m}\}$.
We are now ready to show that the limit function $u$ is a minimizer of the problem (\ref{l5}). To proceed, we rewrite the functional (\ref{l5}) evaluated over the minimizing
sequence $\{u_{m}\}$ as
\begin{equation}\label{l9}
I(u_{m})=\frac{1}{2}\int_{0}^{+\infty}\left\{(u'_{m}(r))^{2}+\left(\frac{n^2}{r^2}+2\omega\right)u^2_{m}(r)+\alpha u^4_{m}(r)\ln\frac{u^2_{m}(r)}{\beta}-\frac{\alpha}{2}u^4_{m}(r)\right\}r\mathrm{d}r.
\end{equation}
Taking $m\rightarrow\infty$ in (\ref{l9}) and applying (\ref{l7}) and (\ref{l8}), we immediately arrive at the desired conclusion
$$
I(u)\leq\liminf_{m\rightarrow\infty}I(u_{m}).
$$
Consequently, $u$ is a solution to (\ref{l5}), but we must prove that $u\neq0$. For this purpose we show that there exists $v\in H$ such that $I(v)<0$. Because of (\ref{e1}), the function $Q(s)=s^{2}(2\omega+\alpha s^{2}\ln\frac{s^{2}}{\beta}-\frac{\alpha}{2}s^{2})$ is negative for some $k>0$, namely, $Q(k)<0$.
Let us consider the function $v:[0,\infty]\rightarrow\mathbb{R}$ such that
\begin{equation}
v(r)=\left\{
\begin{array}{lll}
   kr, & & { 0\leq r<1,}\\
   k, & & { 1\leq r <R,}\\
   k\mathrm{e}^{\lambda(R-r)}, & & { r\geq R,}
    \end{array}\right.\notag
\end{equation}
where $\lambda$ is a positive constant.
We claim that there exists $R$ large enough such that $I(v)<0$. Since $v(r)\leq k$ and $v'(r)=0$ on $[1,R-1]$, a simple calculation shows that
\begin{equation}\label{e4}
\int^{R}_{1}(v'^{2}+\frac{n^{2}}{r^{2}}v^{2})r\mathrm{d}r=\frac{k^{2}}{2}(2R-1)+n^{2}k^{2}\ln R.
\end{equation}
Furthermore,
\begin{equation}\label{e5}
\int^{R-1}_{1}Q(v(r))r\mathrm{d}r=\frac{Q(k)}{2}(R^{2}-1),
\end{equation}
and
\begin{equation}\label{e6}
\int^{R}_{\infty}Q(v(r))r\mathrm{d}r=C_{1}R+C_{2},
\end{equation}
where $C_{1}$ and $C_{2}$ are only dependent on $\alpha$, $\beta$, $k$ and $\lambda$.
From (\ref{e4}), (\ref{e5}) and (\ref{e6}), we have
$$
I(v)=\frac{1}{2}Q(k)R^{2}+O(R)~~~\text{for~}R\rightarrow\infty.
$$
Then, the claim follows.

To prove the existence of a positive solution of the boundary value problem (\ref{a})-(\ref{b}), we suppose that there exist a point $r_{0}$ such that $u'(r_{0})=0$. Applying the uniqueness theorem for the initial value problem of ordinary differential equations, we have $u(r)=0$ for all $r\in(0,\infty)$, which contradicts the fact that $u$ is a non-trivial solution.
\end{proof}

Next we will prove the solution $u$ decays exponentially at infinity. For convenience, we rewrite (\ref{a}) with $P'(u)$ as
\begin{equation}{\label{p1}}
u''+\frac{1}{r}u'=\frac{n^{2}}{r^{2}}+2\omega u+\frac{\alpha}{2} P'(u).
\end{equation}
If $u\in C^{0}[0,\infty)\cap C^{2}(0,\infty)$, then multiplying (\ref{p1}) by $u'r^{2}$, integrating over $[r_{1},r_{2}]\subset(0,\infty)$, we have
\begin{equation}{\label{p2}}
\begin{aligned}
u'^{2}(r_{2})r_{2}^{2}-u'^{2}(r_{1})r_{1}^{2}=&n^{2}(u^{2}(r_{2})-u^{2}(r_{1}))+2\omega(u^{2}(r_{2})r_{2}^{2}-u^{2}(r_{1})r_{1}^{2})+\alpha(P(u(r_{2}))r_{2}^{2}-P(u(r_{1}))r_{1}^{2})\\
&-4\omega\int^{r_{2}}_{r_{1}}u^{2}r\mathrm{d}r-2\alpha\int^{r_{2}}_{r_{1}}P(u)r\mathrm{d}r.
\end{aligned}
\end{equation}

\begin{lemma}\label{p3}
Let $u$ be a non-trivial solution of (\ref{a})-(\ref{b}). Then
$$
\lim_{r\rightarrow0}u'(r)r=0.
$$
\end{lemma}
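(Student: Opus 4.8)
The plan is to exploit the identity (\ref{p2}) together with the global information we already possess about $u$, namely $u \in H$ and the decay estimates in property (1) of $H$. The key observation is that we want to show the boundary term $u'^{2}(r_{1})r_{1}^{2}$ in (\ref{p2}) tends to $0$ as $r_{1}\to 0$, and the natural route is to let $r_{2}$ be a fixed reference point (say $r_{2}=1$) and examine the behaviour of the right-hand side as $r_{1}\to 0$. First I would note that $\int_{0}^{1}u'^{2}r\,\dd r \le \|u\|^{2} < \infty$ and $\int_{0}^{1}u^{2}r\,\dd r < \infty$, and that since $u(r)\to 0$ as $r\to 0$ and $\tilde P(u)\in L^{1}((0,\infty);r\,\dd r)$, every term on the right of (\ref{p2}) has a finite limit as $r_{1}\to 0$: the algebraic boundary terms $n^{2}u^{2}(r_{1})$, $2\omega u^{2}(r_{1})r_{1}^{2}$ and $\alpha P(u(r_{1}))r_{1}^{2}$ all go to $0$ because $u(r_{1})\to 0$ and $r_{1}\to 0$, while the two integrals $\int_{r_{1}}^{1}u^{2}r\,\dd r$ and $\int_{r_{1}}^{1}P(u)r\,\dd r$ converge by dominated/monotone convergence. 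Hence $\lim_{r_{1}\to 0}u'^{2}(r_{1})r_{1}^{2}$ exists; call it $\ell \ge 0$.

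The second step is to rule out $\ell > 0$. If $\ell > 0$ then $u'^{2}(r)r^{2}\to \ell$, so $|u'(r)| \sim \sqrt{\ell}/r$ for $r$ small, and therefore $\int_{0}^{1}u'^{2}(r)r\,\dd r \gtrsim \int_{0}^{1}\frac{\ell}{r^{2}}\cdot r\,\dd r = \ell\int_{0}^{1}\frac{\dd r}{r} = +\infty$, contradicting $u\in H$ (which forces $\int_{0}^{1}u'^{2}r\,\dd r<\infty$). Consequently $\ell = 0$, i.e.\ $\lim_{r\to 0}u'(r)r = 0$, which is exactly the claim. One should phrase the last implication carefully: from $u'^{2}r^{2}\to\ell$ one gets $u'^{2}r = (u'^{2}r^{2})/r \ge (\ell/2)/r$ for all sufficiently small $r$, and the integral of $1/r$ near $0$ diverges; this is clean and avoids any delicate asymptotics.

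The main obstacle is a subtle regularity point rather than a hard estimate: (\ref{p2}) was derived by multiplying the equation by $u'r^{2}$ and integrating by parts, which is legitimate on $[r_{1},r_{2}]\subset(0,\infty)$ since $u\in C^{2}(0,\infty)$ by the bootstrap argument, so there is no trouble there. The genuine care is needed in justifying that $\int_{r_{1}}^{1}P(u)r\,\dd r$ converges as $r_{1}\to 0$ — but this is immediate from $\tilde P(u)\in L^{1}((0,\infty);r\,\dd r)$ established before Lemma \ref{l1} (and $P = \tilde P - \tfrac12 s^{4}$ with $u^{4}\in L^{1}$ since $u\in L^{4}$), and in confirming that the limit $\ell$ is well-defined, which follows because the right-hand side of (\ref{p2}), as a function of $r_{1}$, is a sum of terms each having a limit. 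Once these bookkeeping points are in place, the dichotomy $\ell = 0$ versus $\ell>0$ closes the argument in one line via the non-integrability of $1/r$ at the origin.
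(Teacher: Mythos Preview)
Your argument is correct and follows essentially the same route as the paper: use the identity (\ref{p2}) to show that $\lim_{r\to 0}u'^{2}(r)r^{2}$ exists, then rule out a positive limit by a divergence argument. The only minor variation is in the second step: the paper observes that $|u'(r)|>\sqrt{l_{1}}/r$ near $0$ forces $|u|$ to blow up logarithmically, contradicting $u(0)=0$, whereas you use the non-integrability of $u'^{2}r\gtrsim 1/r$ to contradict $u\in H$; both are equally clean and your bookkeeping on the integrability of the right-hand side of (\ref{p2}) is slightly more explicit than the paper's appeal to the Cauchy criterion.
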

\begin{proof}
From (\ref{p2}), applying Cauchy convergence principle, we have the limit $\lim\limits_{r\rightarrow0}u'^{2}(r)r^{2}$ exists, saying $l$. If $l>0$, then there exist $\delta>0$ and $0<\sqrt{l_{1}}<l$, such that $|u'(r)|>\frac{\sqrt{l_{1}}}{r}$ for any $r\in(0,\delta)$. By integrating on $[0,r]\subset[0,\delta)$, we obtain a contradiction to the fact $u(0)=0$. Then $l=0$ follows.
\end{proof}

\begin{lemma}
Let u be a non-trivial solution of (\ref{a})-(\ref{b}). Then $u$ decays exponentially as $r\rightarrow\infty$.
\end{lemma}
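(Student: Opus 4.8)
The plan is to establish exponential decay of the solution $u$ by analyzing the ODE \eqref{a} for large $r$ and using a comparison/barrier argument. The key observation is that since $u\in H$, we have $u(r)\to 0$ as $r\to\infty$ (indeed $u=O(r^{-1/2})$), so the nonlinear term becomes negligible compared to the linear mass term $2\omega u$ with $\omega\geq\mu>0$. More precisely, I would first fix $R_0>0$ large enough that for $r\geq R_0$ one has $|u(r)|$ so small that $\alpha u^2(r)|\ln(u^2(r)/\beta)|\leq \omega$, which is possible because $s^2\ln(s^2/\beta)\to 0$ as $s\to 0$. Then on $[R_0,\infty)$ the equation \eqref{a} rewritten as
\begin{equation}
u''+\frac{1}{r}u'=\left(\frac{n^2}{r^2}+2\omega+2\alpha u^2\ln\frac{u^2}{\beta}\right)u\notag
\end{equation}
has coefficient on the right-hand side bounded below by $2\omega-2\omega=$ wait — more carefully, the bracket is $\geq 2\omega - 2\alpha u^2|\ln(u^2/\beta)| \geq 2\omega-\omega=\omega>0$ (absorbing the positive $n^2/r^2$ term trivially). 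So $u$ is a positive subsolution-type object for a linear equation with a strictly positive potential.

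Next I would construct an explicit supersolution. The natural comparison function is a multiple of the modified Bessel function $K_0(\sqrt{\omega}\,r)$, or more elementarily $w(r)=C r^{-1/2}\e^{-\sqrt{\omega}(r-R_0)/2}$ or even just $w(r)=C\e^{-c r}$ for suitable small $c>0$; one checks that for $c$ small enough (say $c^2<\omega$) and $r\geq R_0$, $w''+\frac1r w'-(\frac{n^2}{r^2}+\omega)w\leq 0$, so $w$ is a supersolution of the linear comparison equation $v''+\frac1r v'=(\frac{n^2}{r^2}+\omega)v$. Choosing $C$ large enough that $w(R_0)\geq u(R_0)$ and noting $w(r)\to 0=\lim u(r)$ as $r\to\infty$, the maximum principle (applied to $u-w$ on $[R_0,\infty)$, using that $u-w$ cannot have an interior positive maximum since there $(u-w)''\leq \frac{n^2}{r^2}(u-w)+2\omega u - \omega w \cdot(\text{sign issues})$) gives $u(r)\leq w(r)=C\e^{-c r}$ for all $r\geq R_0$. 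One should phrase the maximum principle carefully: if $u-w>0$ somewhere in the interior, pick the maximum point $r_*$ (it exists since $u-w\leq 0$ at both ends, taking $R=\infty$ as a limit), then $(u-w)'(r_*)=0$, $(u-w)''(r_*)\leq 0$, but plugging into the equations yields $(u-w)''(r_*)+\frac{1}{r_*}(u-w)'(r_*)\geq \frac{n^2}{r_*^2}(u-w)(r_*)+\omega(u-w)(r_*)>0$ using $2\omega u\geq\omega u\geq\omega w$ at that point — wait, that needs $u(r_*)\geq w(r_*)$, which holds since $u-w>0$ there — a contradiction.

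The main obstacle is handling the behavior at the endpoints of the comparison interval correctly, specifically the fact that $R=\infty$ is not a genuine boundary point: one must argue that $u-w$ attains its supremum on $[R_0,\infty)$ as an actual maximum, which follows because both $u$ and $w$ tend to $0$ at infinity, so if the supremum is positive it is attained at some finite $r_*\in(R_0,\infty)$. A secondary technical point is justifying the sign of the potential term — one must ensure the smallness threshold for $|u|$ is met for \emph{all} $r\geq R_0$, which uses the previously established fact that $u(r)\to 0$ monotonically enough (or at least that $\sup_{r\geq R_0}|u(r)|\to 0$ as $R_0\to\infty$, which is immediate from $u=O(r^{-1/2})$). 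Once $u\leq C\e^{-cr}$ is established, one can bootstrap: feeding this back into the ODE shows $u'$ also decays exponentially (e.g. from \eqref{p2} or by a direct estimate on $u'$), completing the proof that $u$ decays exponentially as $r\to\infty$.
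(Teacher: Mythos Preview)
Your comparison/barrier argument is essentially correct and is a genuinely different route from the paper's. The paper proceeds in three ODE-style steps: (1) show $u$ has a fixed sign on some $[R,\infty)$ by ruling out infinitely many sign changes via the equation evaluated at local maxima; (2) show $ru'$ is eventually increasing with limit $0$, hence $u'<0$ there; (3) from $u''>cu$ and $u'<0$, multiply by $u'$ and integrate to get $u'/u<-\sqrt{c}$, then integrate again. Your approach bypasses the monotonicity analysis entirely by constructing the explicit supersolution $w=C\e^{-cr}$ and applying the maximum principle on $[R_0,\infty)$, using that $u,w\to 0$ at infinity to locate an interior maximum. The paper's route yields additional qualitative information (eventual sign and monotonicity of $u$, and a decay rate arbitrarily close to $\sqrt{2\omega}$); your route is cleaner and does not require the intermediate sign and monotonicity steps.

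One small gap worth flagging: your comparison gives only $u\leq C\e^{-cr}$, not $|u|\leq C\e^{-cr}$, because you tacitly assume $u>0$ on $[R_0,\infty)$ (``$u$ is a positive subsolution-type object'') without proving it---this is precisely the content of the paper's Step~1. The simplest patch within your framework is to observe that $-u$ solves the same equation (\ref{a}) and rerun the identical comparison to obtain $-u\leq w$, hence $|u|\leq C\e^{-cr}$. With that addition your argument is complete.
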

\begin{proof}
Step 1. We claim that $u$ does not change sign on some interval $[R,\infty)$. Otherwise, there exists a sequence $\{r_{m}\}$, with $r_{m}\rightarrow\infty$, such that
$u(r_{m})>0$, $u'(r_{m})=0$, $u''(r_{m})\leq0$. Then, from (\ref{p1}) we have
$$
\begin{aligned}
u''(r_{m})&=u''(r_{m})+\frac{1}{r_{m}}u'(r_{m})=\left(\frac{n^{2}}{r^{2}_{m}}+2\omega+\frac{\alpha}{2}\frac{P'(u(r_{m}))}{u(r_{m})}\right)u(r_{m})\\
&\geq\left(2\mu+\frac{\alpha}{2}\frac{P'(u(r_{m}))}{u(r_{m})}\right)u(r_{m}),
\end{aligned}
$$
thus we get a contradiction since $P'(s)/s\rightarrow0$ as $r\rightarrow0$. Without loss of generality, we can assume that $u(r)>0$ on $[R,\infty)$, for $R$ big enough.

Step 2. We now show that $u'(r)<0$ on $[R_{1},\infty)\subset[R,\infty)$. Since
$$
(u'(r)r)'=u''(r)r+u'(r)\geq\left(2\mu+\frac{\alpha}{2}\frac{P'(u(r))}{u(r)}\right)u(r)r,
$$
there exists $R_{1}\geq R$ such that $(ru')'>0$ for any $r\in [R_{1},\infty)$, which implies $u'(r)r$ strictly increasing on $[R_{1},\infty)$. It follows that the limit $\lim\limits_{r\rightarrow\infty}u'(r)r$ exists. By a similar argument of Lemma \ref{p3}, we have $\lim\limits_{r\rightarrow\infty}u'(r)r=0$. Consequently, we get $u'(r)<0$.

Step 3. To prove the exponential decay, we see that on $[R_{1},\infty)$
$$
u''(r)-\left(2\omega+\frac{\alpha}{2}\frac{P'(u(r))}{u(r)}\right)u(r)=\frac{n^{2}}{r^{2}}u(r)-\frac{1}{r}u'(r)>0,
$$
so that
$$
u''(r)>\left(2\omega+\frac{\alpha}{2}\frac{P'(u(r))}{u(r)}\right)u(r).
$$
Since $P'(s)/s\rightarrow0$ as $r\rightarrow0$, we can find some $c>0$ such that $u''(r)>cu(r)$. Multiplying by $u'(r)<0$, and integrating on $[r,\infty]$, we have $u'^{2}(r)>cu^{2}(r)$, so that
$$
u'(r)/u(r)<-\sqrt{c}.
$$
By integrating again on $[r_{1},r]$, we obtain
$$
0<u(r)<u(r_{1})\mathrm{e}^{-\sqrt{c}(r-r_{1})}.
$$
\end{proof}

%{\bf Proof of Theorem 1.1\rm{(1)}:} From the Lemma 2.2 and Lemma 2.4, Theorem 1.1\rm{(1)} is immediately obtained.

\section{Vortices for non-homogeneous problem}
In this section we follow the idea of \cite{c}, using a constructive argument to obtain the vortex solutions of the following two-point boundary value problem with $0<\omega<{3\alpha\beta}/{4\mathrm{e}}$.
\begin{equation}\label{l1}
u''+\frac{1}{r}u'-\left(\frac{n^2}{r^2}+2\omega\right)u-2\alpha u^3\ln\left(\frac{u^2}{\beta}\right)=0,
\end{equation}
\begin{equation}\label{12}
u(0)=0,~u(\infty)=k.
\end{equation}
It is easy to check $k>\sqrt{\frac{\beta}{\mathrm{e}}}$.

Define
\begin{equation}
g(t)=\left\{
 \begin{array}{lll}
   2\omega t+2\alpha t^3\ln(\frac{t^2}{\beta}), & & {t\geq0,}\\
    2\omega t, & & {t<0.}
    \end{array}
    \right.\notag
\end{equation}
Then we consider the equation
\begin{equation}\label{13}
u''+\frac{1}{r}u'=\frac{n^2}{r^2}u+g(u)
\end{equation}
with the boundary conditions as in (\ref{12}). We claim that the solution of (\ref{13})-(\ref{12}) is positive. Otherwise, since $u(r)\rightarrow k>0$ as $r\rightarrow\infty$, there exist $r_{1}$ and $r_{2}$, $0\leq r_{1}<r_{2}$ such that $u(r_{1})=u(r_{2})=0$ and $u(r)<0$ in $(r_{1},r_{2})$. Then applying the maximum principle to (\ref{13}) with $g(u)=2\omega u$ over $[r_{1},r_{2}]$, we immediately obtain a contradiction. Thus, a solution of (\ref{13})-(\ref{12}) solves two-point boundary value problem (\ref{l1})-(\ref{12}).

It is necessary to reduce the non-homogeneous problem (\ref{13})-(\ref{12}) to a homogeneous one. For this, let $\varphi:[0,\infty)\rightarrow[0,k]$ be a smooth function with $\varphi=0$ on $[0,1]$ and $\varphi=k$ on $[2,\infty)$. Set
\begin{equation}\label{j1}
\eta(r)=\varphi''(r)+\frac{1}{r}\varphi'(r)-\frac{n^2}{r^2}\varphi(r).
\end{equation}
Clearly, $\eta(r)$ is smooth and $\eta=0$ on $[0,1]$ and $\eta=-{n^{2}k}/{r^{2}}$ on $[2,\infty)$. Then $u(r)$ is a solution of (\ref{13})-(\ref{12}) if and only if the function $v(r)=u(r)-\varphi(r)$ is a solution of
\begin{equation}\label{14}
v''+\frac{1}{r}v'=\frac{n^2}{r^2}+g(v+\varphi)-\eta(r),
\end{equation}
with boundary conditions
\begin{equation}\label{15}
v(0)=v(\infty)=0.
\end{equation}

In order to introduce a suitable functional framework for problem (\ref{14})-(\ref{15}), we first show that $g'(k)>0$. Since $k>\sqrt{\beta/\mathrm{e}}$ and $g(k)=0$, we have $\omega+\alpha k^{2}\ln(k^{2}/ \beta)=0$, so that
\begin{equation}
g'(k)=2\omega+6\alpha k^2\ln\frac{k^2}{\beta}+4\alpha k^2=4\alpha k^2\left(\ln\frac{k^2}{\beta}+1\right)>0.\notag
\end{equation}
It make sense to consider the space $H_{1}$ defined as the closure of $C_{0}^{\infty}(0,+\infty)$ for the norm
\begin{equation}
\|v\|_{1}=\left(\int_{0}^{+\infty}(v'(r))^{2}r+
\left(\frac{n^2}{r^2}+g'(k)\right)v^2(r)r\mathrm{d}r\right)^\frac{1}{2}.
\end{equation}
The space $H_{1}$ is a slight variant of the space $H$ introduced in Section 2. So $H_{1}$ has the same properties with $H$. In particular, $v\in H_{1}$ implies that  $v$ is continuous and satisfies (\ref{15}). We denote $G$ a primitive of $g$, namely, $G'(s)=g(s)$. Let us consider the following functional:
\begin{equation}\label{j1}
I_{1}(v)=\int_{0}^{+\infty}\left\{\frac{1}{2}(v'(r))^{2}+\frac{1}{2}\frac{n^2}{r^{2}}v^2(r)+G(v(r)+\varphi(r))-G(k)-\eta(r)v(r)\right\}r\mathrm{d}r.
\end{equation}
Since $G(s)-G(k)=O((s-k)^{2})$ near $k$, and $\eta(r)=O(1/r^{2})$ for $r\rightarrow +\infty$, the functional $I_{1}$ is well defined on $H_{1}$, and its critical points are (smooth) solutions of (\ref{14})-(\ref{15}).

\begin{lemma}
 The action functional $I_{1}$ given in (\ref{j1}) is coercive on $H_{1}$, if $0<\omega<3\alpha\beta/4\mathrm{e}$.
\end{lemma}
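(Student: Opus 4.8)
The plan is to show $I_1(v)\to +\infty$ as $\|v\|_1\to\infty$ by splitting the nonlinear contribution according to the size of $u(r)=v(r)+\varphi(r)$, mirroring the strategy used in the proof of Lemma \ref{l1}. First I would rewrite the functional in the form
\begin{equation}
I_1(v)=\frac12\|v\|_1^2+\int_0^{+\infty}\left\{G(v+\varphi)-G(k)-\frac12 g'(k)v^2-\eta v\right\}r\,\dd r,\notag
\end{equation}
so that the leading term is $\tfrac12\|v\|_1^2$ and everything else must be shown to be bounded below by a small multiple of $\|v\|_1^2$ plus a constant. The linear term $\int \eta(r)v(r)r\,\dd r$ is harmless: since $\eta(r)=O(r^{-2})$ it defines a bounded linear functional on $H_1$ (using the embeddings in property (2)--(3) of the space), so it is dominated by $\varepsilon\|v\|_1^2+C_\varepsilon$ for any $\varepsilon>0$. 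The term $-G(k)$ integrated against $r\,\dd r$ over the region where it does not cancel is finite because $G(v+\varphi)-G(k)$ decays; more precisely one groups $G(v+\varphi)-G(k)$ together.

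The core estimate is on $\Phi(v):=\int_0^{+\infty}\big(G(v+\varphi)-G(k)-\tfrac12 g'(k)v^2\big)r\,\dd r$. I would partition $[0,\infty)$ into three sets: where $|u|=|v+\varphi|$ is small, where it is of moderate size (a bounded set, controlled using that $u(r)\to k$ and $v$ is continuous), and where $|u|$ is large. On the large set, the point is that $G(s)=\omega s^2+\tfrac12\alpha s^4\ln(s^2/\beta)-\tfrac14\alpha s^4$ is \emph{positive} for $|s|$ large (the quartic-log term dominates), so that part of the integrand is bounded below by $-\tfrac12 g'(k)v^2$ only on a set of finite measure, or better, $G(v+\varphi)$ outgrows the quadratic and actually helps. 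On the small set, $G(v+\varphi)-G(k)-\tfrac12 g'(k)v^2$ is $o(v^2)$ near $v=0$ after using the Taylor expansion $G(s)-G(k)=\tfrac12 g'(k)(s-k)^2+o((s-k)^2)$, hence bounded by $\varepsilon v^2$, which is absorbed into $\varepsilon c_2^2\|v\|_1^2$. On the moderate set, the integrand is bounded and the set has finite measure (here is where $0<\omega<3\alpha\beta/4\mathrm{e}$ enters, guaranteeing the relevant sublevel structure of $G$ so that the ``bad'' region is compact), contributing a constant. Collecting, $I_1(v)\ge (\tfrac12-\varepsilon\alpha c_2^2-\varepsilon)\|v\|_1^2-C$, and choosing $\varepsilon$ small gives coercivity.

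The main obstacle will be the moderate-size region and the precise role of the hypothesis $0<\omega<3\alpha\beta/4\mathrm{e}$: one must verify that $G(s)-G(k)-\tfrac12 g'(k)(s-k)^2$, or rather the full integrand after subtracting the quadratic form, stays bounded below uniformly on the range of $s$ that is neither small nor large, and that the set $\{r:u(r)\text{ in that range}\}$ has finite measure independent of $v$. Since $v\in H_1$ only controls $\int v^2 r\,\dd r$ and $\|v\|_\infty\le \sqrt{c_1}\|v\|_1$, the set where $|v|\ge\delta$ has measure $\le c_2^2\delta^{-2}\|v\|_1^2$, which is \emph{not} uniformly bounded; so one cannot simply say ``finite measure.'' The fix is that on that set the integrand should be bounded below by $-\varepsilon v^2$ as well (not merely by a constant), which requires checking that $G(s)-G(k)-\tfrac12 g'(k)v^2 \ge -\varepsilon v^2 - (\text{const on a genuinely compact }s\text{-set})$ — and this is exactly the inequality whose validity the constraint on $\omega$ secures, because it forces $k$ to lie where $G$ is convex enough. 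I expect the bulk of the work, and the only genuinely delicate point, to be isolating that compact $s$-interval and estimating $G$ there; the rest is the same bookkeeping as in Lemma \ref{l1}.
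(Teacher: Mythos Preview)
Your strategy is workable in principle but is a substantial detour compared with the paper, and the ``fix'' you describe at the end is in fact precisely the key step that makes the whole argument short. The paper does \emph{not} partition the domain at all. Instead it observes that the hypothesis $0<\omega<3\alpha\beta/(4e)$ guarantees a single global inequality
\[
G(s)-G(k)\ \ge\ \bar{c}\,(s-k)^2\qquad\text{for all }s\in\mathbb{R},
\]
for some $\bar{c}>0$. Once this is in hand, one simply writes
\[
\int_0^\infty\bigl(G(v+\varphi)-G(k)\bigr)r\,\dd r\ \ge\ \bar{c}\int_0^\infty(v+\varphi-k)^2 r\,\dd r\ \ge\ \bar{c}\int_0^\infty v^2 r\,\dd r-a_1\Bigl(\int_0^\infty v^2 r\,\dd r\Bigr)^{1/2},
\]
uses H\"older for the $\eta v$ term, and obtains $I_1(v)\ge a_4\|v\|_1^2-a_3$ in two lines. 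No splitting into small/moderate/large regimes is needed, and the only place the constraint on $\omega$ enters is in securing this quadratic lower bound for $G$ around its global minimum $k$.

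Notice that your proposed ``fix'' --- requiring $G(v+\varphi)-G(k)-\tfrac12 g'(k)v^2\ge -\varepsilon v^2$ on the troublesome set --- is, for $r\ge 2$ where $\varphi=k$, literally the inequality $G(s)-G(k)\ge(\tfrac12 g'(k)-\varepsilon)(s-k)^2$, i.e.\ exactly the paper's bound with $\bar c=\tfrac12 g'(k)-\varepsilon$. So the partitioning machinery you set up is not doing any work: the entire content collapses to verifying that global quadratic lower bound, and once you have it you should use it directly rather than route it through a three-region decomposition. Your identification of the obstacle (the moderate set has measure growing with $\|v\|_1$) is correct, and it is precisely why the Lemma~\ref{l1} template does not transplant cleanly here; the resolution is to abandon the partition and prove the pointwise inequality on $G$ instead.
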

\begin{proof}
It is easy to check that if $0<\omega<3\alpha\beta/4\mathrm{e}$, there exists $\bar{c}>0$ such that for all $s\in\mathbb{R}$, $G(s)-G(k)\geq\bar{c}(s-k)^{2}$. Then we can find a positive constant $a_{1}$ independent of $v$, such that
\begin{equation}
\begin{aligned}
\int_{0}^{+\infty}\left(G(v(r)+\varphi(r))-G(k)\right)r\mathrm{d}r&\geq\bar{c}\int_{0}^{+\infty}(v(r)+\varphi(r)-k)^2r\mathrm{d}r\\
&\geq\bar{c}\int_{0}^{+\infty}v^2(r)r\mathrm{d}r-a_{1}\left(\int_{0}^{+\infty}v^2(r)r\mathrm{d}r\right)^\frac{1}{2}.\notag
\end{aligned}
\end{equation}
And using H\"{o}lder inequality, we have
\begin{equation}
\int_{0}^{+\infty}\eta(r)v(r)r\mathrm{d}r\leq a_{2}\left(\int_{0}^{+\infty}v^2(r)r\mathrm{d}r\right)^\frac{1}{2},\notag
\end{equation}
for some $a_{2}>0$, so that
\begin{equation}\label{j2}
\int_{0}^{+\infty}\left(G(v(r)+\varphi(r))-G(k)\right)r\mathrm{d}r-\int_{0}^{+\infty}\eta(r)v(r)r\mathrm{d}r
\geq\frac{\bar{c}}{2}\int_{0}^{+\infty}v^2(r)r\mathrm{d}r-a_{3},
\end{equation}
where $c_{3}=(a_{1}+a_{2})^{2}/2\bar{c}$. Inserting (\ref{j2}) to (\ref{j1}), then we obtain the coercive inequality
\begin{equation}\label{16}
I_{1}(v)\geq\frac{1}{2}\int_{0}^{+\infty}\left((v'(r))^{2}+\left(\frac{n^2}{r^2}+\bar{c}\right)v^2(r)\right)r\mathrm{d}r-a_{3}\geq a_{4}\|v\|^2_{1}-a_{3},
\end{equation}
where $a_{4}=\min(1,\bar{c}/g'(k))/2$.
\end{proof}

We now use the Taylor series of $G(s)$ on $s=k$:
\begin{equation}
G(s)-G(k)=\frac{1}{2}G''(k)(s-k)^2+Q(s-k)=g'(k)(s-k)^2+Q(s-k),\notag
\end{equation}
where $Q(s)$ is a polynomial with degree$>2$, and rewrite the functional $I_{1}$ in the form
\begin{equation}
\begin{aligned}
I_{1}(v)=&\frac{1}{2}\|v\|^{2}_{1}+g'(k)\int_{0}^{+\infty}(\varphi(r)-k)v(r)r\mathrm{d}r
+\frac{1}{2}g'(k)\int_{0}^{+\infty}(\varphi(r)-k)^{2}r\mathrm{d}r\notag\\
&+\int_{0}^{+\infty}Q(v(r)+\varphi(r)-k)r\mathrm{d}r-\int_{0}^{+\infty}\eta(r)v(r)r\mathrm{d}r.
\end{aligned}
\end{equation}

\begin{lemma}
Let $\{v_{m}\}$ be a sequence on $H_{1}$ such that $|I_{1}(v_{m})| \leq c$ and $I'(v_{m})\rightarrow0$ as $n\rightarrow\infty$. Then a subsequence of ${v_{m}}$ converges strongly to some $v\in H_{1}$.
\end{lemma}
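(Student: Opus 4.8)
This is a Palais--Smale type compactness condition, and the plan is to verify it by the standard scheme: boundedness of the sequence, extraction of a weak limit, and upgrading weak to strong convergence by exploiting the structure of $I_1$.

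First I would show $\{v_m\}$ is bounded in $H_1$. This is immediate from the coercive inequality $I_1(v)\ge a_4\|v\|_1^2-a_3$ of the previous lemma together with the hypothesis $|I_1(v_m)|\le c$, whence $\|v_m\|_1^2\le (c+a_3)/a_4$. Since $H_1$ is a Hilbert space, after passing to a subsequence $v_m\rightharpoonup v$ weakly in $H_1$ for some $v\in H_1$. By the embedding properties of $H_1$ (inherited from those of $H$ listed in Section 2): $v_m\to v$ strongly in $C[\delta,R]$ for every $0<\delta<R<\infty$, $v_m\to v$ strongly in $L^p((0,\infty);r\mathrm{d}r)$ for every $p>2$, and $\sup_m\|v_m\|_\infty<\infty$ because $\|v_m\|_\infty\le\sqrt{c_1}\|v_m\|_1$. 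In particular all the functions $v_m+\varphi$ take values in one fixed bounded interval, on which $g\in C^1$ and $g'$ is Lipschitz.

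The heart of the argument is the identity obtained by testing $I_1'(v_m)$ and $I_1'(v)$ against $v_m-v$ and subtracting. Since $\langle I_1'(u),w\rangle=\langle u,w\rangle_1+\int_0^{+\infty}\big(g(u+\varphi)-g'(k)u-\eta\big)w\,r\mathrm{d}r$, the $\eta$-term cancels in the difference, and by the mean value theorem $g(v_m+\varphi)-g(v+\varphi)=g'(\zeta_m)(v_m-v)$ with $\zeta_m(r)$ between $v(r)+\varphi(r)$ and $v_m(r)+\varphi(r)$. Hence
\[
\|v_m-v\|_1^2=\langle I_1'(v_m)-I_1'(v),\,v_m-v\rangle-\int_0^{+\infty}\big(g'(\zeta_m)-g'(k)\big)(v_m-v)^2\,r\mathrm{d}r.
\]
The first term on the right tends to $0$: $|\langle I_1'(v_m),v_m-v\rangle|\le\|I_1'(v_m)\|_{H_1^*}\|v_m-v\|_1\to0$ since $\|I_1'(v_m)\|_{H_1^*}\to0$ and $\|v_m-v\|_1$ is bounded, while $\langle I_1'(v),v_m-v\rangle\to0$ because $I_1'(v)$ is a fixed element of $H_1^*$ and $v_m-v\rightharpoonup0$. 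It remains to show the last integral tends to $0$, which I would do by splitting at a large radius $R$. On $(0,R)$ the factor $g'(\zeta_m)-g'(k)$ is uniformly bounded and the embedding $H^1((0,R);r\mathrm{d}r)\hookrightarrow L^2((0,R);r\mathrm{d}r)$ is compact on the bounded interval (Rellich), so $\int_0^R|g'(\zeta_m)-g'(k)|(v_m-v)^2r\mathrm{d}r\le C\|v_m-v\|_{L^2((0,R);r\mathrm{d}r)}^2\to0$; note this already takes care of the neighbourhood of the origin, where $v_m(0)=0$ and the $n^2/r^2$ weight make the $L^2(r\mathrm{d}r)$-norm well behaved. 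On $(R,\infty)$ we have $\varphi\equiv k$, so $|\zeta_m-k|\le|v_m|+|v|$ and, by the Lipschitz bound, $|g'(\zeta_m)-g'(k)|\le L(|v_m|+|v|)$; then Hölder's inequality with three exponents equal to $3$ gives $\int_R^{+\infty}|g'(\zeta_m)-g'(k)|(v_m-v)^2r\mathrm{d}r\le L\big(\|v_m\|_{L^3(r\mathrm{d}r)}+\|v\|_{L^3(r\mathrm{d}r)}\big)\|v_m-v\|_{L^3(r\mathrm{d}r)}^2\to0$, since the first factor is bounded and $\|v_m-v\|_{L^3(r\mathrm{d}r)}\to0$ by the compact embedding $H_1\hookrightarrow L^3((0,\infty);r\mathrm{d}r)$. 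Combining the two pieces, $\|v_m-v\|_1\to0$, which is the assertion.

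The main obstacle is precisely the behaviour as $r\to\infty$: the embedding $H_1\hookrightarrow L^2((0,\infty);r\mathrm{d}r)$ is \emph{not} compact, so one cannot pass to the limit in the quadratic part of the nonlinearity directly. The resolution uses two features built into the construction: off a bounded set one has $\varphi=k$ and $g(k)=0$, so the relevant factor $g'(\zeta_m)-g'(k)$ is of size $|v_m|+|v|$ there, turning the tail contribution into a genuinely superquadratic quantity; and the compact embedding into $L^p((0,\infty);r\mathrm{d}r)$ for $p>2$ is at one's disposal. Together with the uniform $L^\infty$-bound on $\{v_m\}$, these make the tail integral vanish in the limit, and the proof closes.
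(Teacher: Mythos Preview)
Your proof is correct and follows essentially the same scheme as the paper: coercivity gives boundedness, weak compactness gives a limit $v$, and strong convergence is recovered by testing the derivative against $v_m-v$ and showing that the nonlinear remainder is controlled by the compact embedding $H_1\hookrightarrow L^p((0,\infty);r\mathrm{d}r)$, $p>2$.

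The only organizational difference is how the nonlinearity is decomposed. The paper first rewrites $I_1$ via the Taylor expansion $G(s)-G(k)=\tfrac12 g'(k)(s-k)^2+Q(s-k)$, so that the quadratic part is absorbed into $\|\cdot\|_1^2$ and the remainder $Q'(v_m+\varphi-k)$ is manifestly of order $\ge 2$ in $v_m+\varphi-k$; it then tests only $I_1'(v_m)$ against $v-v_m$ and kills the linear pieces (the $(\varphi-k)$ and $\eta$ terms) by weak convergence. You instead test the difference $I_1'(v_m)-I_1'(v)$, apply the mean value theorem to $g$, and use the Lipschitz bound on $g'$ together with $\varphi\equiv k$ on $[2,\infty)$ to see that $g'(\zeta_m)-g'(k)=O(|v_m|+|v|)$ at infinity. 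Both routes land on the same H\"older/$L^3$ estimate for the tail; your version has the mild advantage of avoiding the paper's (inaccurate) description of $Q$ as a ``polynomial,'' while the paper's version avoids your appeal to compactness of $H_1\hookrightarrow L^2$ on $(0,R)$ --- a step you could in fact bypass by noting that $L^3((0,R);r\mathrm{d}r)\hookrightarrow L^2((0,R);r\mathrm{d}r)$ since $r\,\mathrm{d}r$ is a finite measure there.
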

\begin{proof}
The coercive inequality (\ref{16}) gives us the bounded
\begin{equation}
\left(\int_{0}^{+\infty}(v_{m}'(r))^{2}r+\left(\frac{n^2}{r^2}+g'(k)\right)v^{2}_{m}(r)r\mathrm{d}r\right)^\frac{1}{2}\leq c,
\end{equation}
where $c>0$ is a constant independent of $m$. Thus, we can suppose that $v_{m}\rightarrow v$ in $H_{1}$. Consequently,
\begin{equation}
\langle v,v-v_{m}\rangle=o(1)~(m\rightarrow\infty).
\end{equation}

Since $I'_{1}(v_{m})\rightarrow0$ as $m\rightarrow$ and $(v-v_{m})$ is bounded, we have
\begin{equation}
\begin{aligned}
\langle I'_{1}(v_{m}),v-v_{m}\rangle
=&\langle v_{m},v-v_{m}\rangle+g'(k)\int_{0}^{+\infty}(\varphi(r)-k)(v-v_{n})r\mathrm{d}r\\
&+\int_{0}^{+\infty}Q'(v_{m}+\varphi(r)-k)(v-v_{m})r\mathrm{d}r-\int_{0}^{+\infty}\eta(r)(v-v_{m})r\mathrm{d}r\\
=&o(1)~(m\rightarrow\infty).\notag
\end{aligned}
\end{equation}
Clearly, we also have
\begin{equation}
g'(k)\int_{0}^{+\infty}(\varphi-k)(v-v_{m})r\mathrm{d}r-\int_{0}^{+\infty}\eta(r)(v-v_{m})r\mathrm{d}r=o(1)~(m\rightarrow\infty).\notag
\end{equation}
Applying the compact embedding $H_{1}((0,+\infty);r\mathrm{d}r)\rightarrow L^{3}((0,+\infty);r\mathrm{d}r)$, we see that
\begin{equation}
\begin{aligned}
\left|\int^{+\infty}_{0}(v_{m}+\varphi-k)^{p}(v-v_{m})r\mathrm{d}r\right|&\leq\left(\int_{0}^{+\infty}|v_{m}+\varphi-k|^{\frac{3p}{2}}r\mathrm{d}r\right)^{\frac{2}{3}}
\left(\int_{0}^{+\infty}|v-v_{m}|^{3}r\mathrm{d}r\right)^{\frac{1}{3}}\\
&=o(1)~(m\rightarrow\infty)\notag
\end{aligned}
\end{equation}
for any $p>2$. Hence
\begin{equation}
\int_{0}^{+\infty}Q'(v_{m}+\varphi(r)-k)(v-v_{m})r\mathrm{d}r=o(1)~(m\rightarrow\infty).\notag
\end{equation}
Then $\|v-v_{m}\|_{1}\rightarrow0~(m\rightarrow\infty$), the lemma is proved.
\end{proof}

%{\bf Proof of Theorem 1.1{\rm(2)}:} From Lemma 3.1 and Lemma 3.2, a minimizing sequence of $I_{1}$ has a convergent subsequence, so that the minimum of $I_{1}$ is attained.

\section{Vortices via constrained minimization}
In this section, we study the existence of optical vortices which are solutions of the boundary value problem
\begin{equation}\label{b00}
u''+\frac{1}{r}u'-\frac{n^{2}}{r^{2}}u-\frac{2u^{3}}{(1+su^{2})^{\gamma}}=2\omega u,
\end{equation}
\begin{equation}\label{b0}
u(0)=u(R)=0,
\end{equation}
as the critical point of the action functional
\begin{equation}\label{b1}
\begin{aligned}
J(u)=&\frac{1}{2}\int^{R}_{0}\left((u')^{2}+\frac{n^{2}}{r^{2}}u^{2}\right)r\mathrm{d}r
+\frac{1}{s^{2}(\gamma-1)(\gamma-2)}\int^{R}_{0}\left(1-\frac{1+\gamma su^{2}}{(1+su^{2})^{\gamma}}\right)r\mathrm{d}r\\
&-\frac{1}{\gamma-2}\int^{R}_{0}\frac{u^{4}}{(1+su^{2})^{\gamma}}r\mathrm{d}r,
\end{aligned}
\end{equation}
with the constraint functional
\begin{equation}\label{b2}
P(u)=\int|E|^{2}r\mathrm{d}r\mathrm{d}\theta=2\pi\int^{R}_{0}u^{2}r\mathrm{d}r.
\end{equation}
which measures the beam power of vortex wave. It suffices to prove the existence of a solution to the following constrained minimization problem
\begin{equation}\label{b3}
\min\left\{J(u)|u\in\mathcal{A}, P(u)=P_{0}>0\right\},
\end{equation}
where the admissible class $\mathcal{A}$ is defined by
\begin{equation}
\mathcal{A}=\left\{u(r) \textrm{ is absolutely continuous over } [0,R], u(R)=0, J(u)<\infty\right\},\notag
\end{equation}
$P_{0}$ is a prescribed value for the beam power, and $\omega$ arises as the Lagrange multiplier.

For convenience, we set
\begin{equation}
q(t)=\frac{1}{s^{2}(\gamma-1)(\gamma-2)}\left(1-\frac{1+\gamma st^{2}}{(1+st^{2})^{\gamma}}\right)
-\frac{t^{4}}{(\gamma-2)(1+st^{2})^{\gamma}},\notag
\end{equation}
where $\gamma>2$ and $s>0$ are parameters. It is easy to check that $q(t)$ attains its minimum at $t=0$, so that for all $t\in\mathbb{R}$, $q(t)\geq0$.

\begin{lemma}
The constrained minimization problem (\ref{b3}) has a positive solution.
\end{lemma}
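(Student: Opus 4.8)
The plan is to apply the direct method in the calculus of variations to the constrained problem \eqref{b3}. First I would verify that the admissible class $\mathcal{A}$, intersected with the constraint set $\{P(u)=P_0\}$, is nonempty: one exhibits an explicit test function, e.g. a suitable radial bump supported away from $r=0$ and $r=R$ (or a tent function like the one used in Lemma \ref{l6}) scaled so that $2\pi\int_0^R u^2\,r\,\dd r=P_0$; since $q(t)\ge 0$ and the gradient term is finite for such a function, $J(u)<\infty$. Because $q(t)\ge 0$ for all $t$, the functional $J$ is bounded below on $\mathcal{A}$ (indeed $J(u)\ge \frac12\int_0^R((u')^2+\frac{n^2}{r^2}u^2)\,r\,\dd r\ge 0$), so the infimum $m:=\inf\{J(u):u\in\mathcal{A},\,P(u)=P_0\}$ is a finite nonnegative number. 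Let $\{u_k\}$ be a minimizing sequence; replacing $u_k$ by $|u_k|$ lowers neither $J$ (since $q$ is even and $||u_k|'|\le|u_k'|$ a.e.) nor changes $P$, so we may assume $u_k\ge 0$.

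Next I would extract compactness. From $J(u_k)\le m+1$ we get a uniform bound on $\int_0^R((u_k')^2+\frac{n^2}{r^2}u_k^2)\,r\,\dd r$; together with the constraint $\int_0^R u_k^2\,r\,\dd r=P_0/2\pi$ this bounds $\{u_k\}$ in the weighted space $H^1((0,R);r\,\dd r)$. Hence, up to a subsequence, $u_k\rightharpoonup u$ weakly in this space and, by the Rellich–Kondrachov-type compact embedding (valid on the bounded interval $(0,R)$, analogous to property (3) of the space $H$ used in Section 2), $u_k\to u$ strongly in $L^2((0,R);r\,\dd r)$ and in $C_{loc}(0,R)$. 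The strong $L^2$ convergence immediately gives $P(u)=P_0$, so $u$ is admissible for the constraint. For lower semicontinuity: the quadratic gradient-plus-angular term $\frac12\int((u')^2+\frac{n^2}{r^2}u^2)\,r\,\dd r$ is weakly lower semicontinuous (it is a norm-square, convex and continuous), while the remaining term $\int_0^R q(u_k)\,r\,\dd r$ converges to $\int_0^R q(u)\,r\,\dd r$: one checks that $q(t)=O(t^2)$ as $t\to 0$ and $q$ grows at most polynomially (in fact $q(t)$ is bounded as $t\to\infty$ because the term $1/s^2(\gamma-1)(\gamma-2)$ dominates and $t^4/(1+st^2)^\gamma\to 0$ for $\gamma>2$), so that $0\le q(t)\le C t^2$ for all $t$; combined with strong $L^2$ convergence and a Vitali/dominated-convergence argument this yields convergence of the $q$-integral. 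Therefore $J(u)\le\liminf_k J(u_k)=m$, and $u$ is a minimizer.

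Then I would recover the Euler–Lagrange equation with a Lagrange multiplier: since $u$ minimizes $J$ subject to $P(u)=P_0$ with $P_0>0$ (so the constraint is not degenerate, $P'(u)\not\equiv 0$), there is $\omega\in\mathbb{R}$ with $J'(u)=\omega P'(u)$ in the weak sense, i.e. $u$ is a weak solution of \eqref{b00}; note that the nonlinearity $2u^3/(1+su^2)^\gamma$ is exactly $q'(u)/2$ up to the correct constant, and $P'(u)$ contributes the $2\omega u$ term (absorbing the $2\pi$ into the definition of $\omega$). Standard elliptic bootstrap then gives $u\in C^2(0,R)$, and the boundary conditions $u(0)=u(R)=0$ hold: $u(R)=0$ by membership in $\mathcal{A}$ (or the closure property of the weighted space), and $u(0)=0$ from the regularity of $u\in H^1((0,R);r\,\dd r)$ at the origin, exactly as in property (1) of the space $H$. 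Finally, positivity: $u\ge 0$ and $u\not\equiv 0$ (since $P(u)=P_0>0$); if $u(r_0)=0$ for some $r_0\in(0,R)$ then $u$ has an interior minimum there with $u'(r_0)=0$, and the uniqueness theorem for the ODE initial value problem forces $u\equiv 0$, a contradiction — hence $u>0$ on $(0,R)$. The main obstacle is the lower-semicontinuity / convergence step for the $q$-term: one must justify passing to the limit in $\int_0^R q(u_k)\,r\,\dd r$ despite only weak convergence, which is handled by the uniform bound $q(t)\le Ct^2$ together with the compact $L^2$ embedding on the bounded interval; a secondary technical point is verifying that $J'(u)=\omega P'(u)$ is legitimate, i.e. that the constraint manifold is smooth near $u$, which follows because $P$ is a nonzero quadratic form.
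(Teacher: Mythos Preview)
Your proposal is correct and follows essentially the same route as the paper: direct minimization with a nonnegative minimizing sequence, boundedness below from $q\ge 0$, weak compactness in the weighted space plus lower semicontinuity, recovery of the constraint $P(u)=P_0$, then the Lagrange multiplier and the ODE--uniqueness argument for strict positivity. The only minor differences are technical---the paper uses Fatou's lemma (rather than your bound $q(t)\le Ct^{2}$ and Vitali) for the $q$-integral, and it derives $u(0)=0$ from the explicit estimate $|u^{2}(r_{2})-u^{2}(r_{1})|\le 2c^{1/2}\bigl(\int_{r_{1}}^{r_{2}} r^{-1}u^{2}\,\dd r\bigr)^{1/2}$ together with the finiteness of $\int_{0}^{R} r^{-1}u^{2}\,\dd r$ (note: it is this bound, coming from the $n^{2}/r^{2}$ term, and not mere membership in $H^{1}((0,R);r\,\dd r)$, that forces $u(0)=0$).
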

\begin{proof}
Because $q(\tau)$ is non-negative, we have
\begin{equation}
\begin{aligned}
J(u)&=\frac{1}{2}\int^{R}_{0}\left((u')^{2}+\frac{n^{2}}{r^{2}}u^{2}\right)r\mathrm{d}r+\int^{R}_{0}q(u)r\mathrm{d}r\\
&\geq\frac{1}{2}\int^{R}_{0}\left((u')^{2}+\frac{n^{2}}{r^{2}}u^{2}\right)r\mathrm{d}r.\notag
\end{aligned}
\end{equation}
Let $\{u_{m}\}$ be a minimizing sequence of (\ref{b3}). Then we can find a positive constant $c$ independent of $m$ to get
\begin{equation}\label{b4}
\int^{R}_{0}(u'_{m})^{2}r\mathrm{d}r+\int^{R}_{0}\frac{1}{r}u^{2}_{m}\mathrm{d}r\leq c.
\end{equation}

Since both functionals $I$ and $P$ are even, and $||u|_{r}|\leq|u_{r}|$, we have $J(u_{m})\geq J(|u_{m}|)$, $P(u_{m})=P(|u_{m}|)$. This implies the sequence $\{u_{m}\}$ may be modified so that each $u_{m}\geq0$. Because of
\begin{equation}
\int^{R}_{0}u^{2}r\mathrm{d}r\leq R^{2}\int^{R}_{0}\frac{u^{2}}{r}\mathrm{d}r,\notag
\end{equation}
the minimizing sequence $\{u_{m}\}$ is bounded in $W^{1,2}((0,R),rdr)$. We may assume without loss of generality that for any $\varepsilon\in(0,R)$, $\{u_{m}\}$ converges weakly to an element $u\in W^{1,2}(\varepsilon,R)$ as $m\rightarrow\infty$. Using the compact embedding $W^{1,2}(\varepsilon,R)\rightarrow C[\varepsilon,R]$ , we have $u_{m}\rightarrow u$ in $C[\varepsilon,R]$  as $m\rightarrow\infty$. And $u=u(r)$, $u(R)=0$. Moreover, for any $0<r_{1}<r_{2}<R$, we have
\begin{equation}\label{b5}
\begin{aligned}
|u^{2}_{m}(r_{2})-u^{2}_{m}(r_{1})|&\leq2\left(\int^{r_{2}}_{r_{1}}(u'_{m})^{2}r\mathrm{d}r\right)^\frac{1}{2}\left(\int^{r_{2}}_{r_{1}}\frac{1}{r}u^{2}_{m}\mathrm{d}r\right)^\frac{1}{2}\\
&\leq2c^\frac{1}{2}\left(\int^{r_{2}}_{r_{1}}\frac{1}{r}u^{2}_{m}\mathrm{d}r\right)^\frac{1}{2},
\end{aligned}
\end{equation}
where the constant $c>0$ is as given in (\ref{b4}). Therefor, taking $m\rightarrow\infty$ in (\ref{b5}), we have
\begin{equation}\label{b05}
|u^{2}(r_{2})-u^{2}(r_{1})|\leq2c^\frac{1}{2}\left(\int^{r_{2}}_{r_{1}}\frac{1}{r}u^{2}\mathrm{d}r\right)^\frac{1}{2}.
\end{equation}

In view of Fatou's lemma, we have
\begin{equation}\label{b6}
\int^{R}_{0}(u')^{2}r\mathrm{d}r\leq\liminf_{m\rightarrow\infty}\int^{R}_{0}(u'_{m})^{2}r\mathrm{d}r,
\end{equation}
\begin{equation}\label{b7}
\int^{R}_{0}\frac{1}{r}u^{2}\mathrm{d}r\leq\liminf_{m\rightarrow\infty}\int^{R}_{0}\frac{1}{r}u^{2}_{m}\mathrm{d}r.
\end{equation}
Applying (\ref{b4}), (\ref{b05}) and (\ref{b7}), we immediately get $u(0)=0$.

The above consideration enables us to the conclusion that the limit of the minimizing sequence $\{u_{m}\}$ satisfies $u(0)=u(R)=0, u(r)\geq0$, and
\begin{equation}
J(u)\leq\liminf_{m\rightarrow\infty}J(u_{m}),~~P(u)=\lim_{m\rightarrow\infty}P(u_{m})=P_{0}.\notag
\end{equation}
Thus, $u$ is a solution to (\ref{b3}). Consequently, there is some $\omega\in\mathbb{R}$ such that $(\omega,u)$ solves (\ref{b00})-(\ref{b0}).

In the following, we will show that $u(r)$ is a positive solution. We may assume that there is a point $r_{0}\in(0,R)$ such that $u(r_{0})=0$. Since $r_{0}$ is a
minimum point for the function $u(r)$, we have $u'(r_{0})=0$. By a similar argument of Lemma 2.2, we obtain $u(r)=0$ for all $r\in(0,R)$, which contradicts the fact $P(u)=P_{0}>0$.
\end{proof}

\begin{lemma}
Let $(u,\omega)$ be a solution pair just obtained. Then
\begin{equation}
-\frac{(\gamma-1)^{(\gamma-1)}}{s\gamma^{\gamma}}-\sqrt{\frac{24(1+n^{2}(2\ln2-1))\pi}{s^{2}(\gamma-1)(\gamma-2)P_{0}}}
\leq\omega<0.
\end{equation}
\end{lemma}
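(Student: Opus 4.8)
The plan is to extract two one-sided bounds from the Euler--Lagrange equation and the minimization property separately. Recall that $(u,\omega)$ satisfies (\ref{b00}) on $(0,R)$ with $u(0)=u(R)=0$, $u>0$, $P(u)=P_0$, and that $u$ minimizes $J$ subject to $P(u)=P_0$. For the \emph{upper bound} $\omega<0$, I would multiply (\ref{b00}) by $u\,r$ and integrate over $(0,R)$. Integration by parts (using $u(0)=u(R)=0$ and the behaviour $u'(r)r\to 0$ near $0$, as in Lemma~\ref{p3}) converts the first two terms into $\int_0^R\big((u')^2+\tfrac{n^2}{r^2}u^2\big)r\,\dd r>0$, so that
\begin{equation}
2\omega\int_0^R u^2 r\,\dd r=-\int_0^R\Big((u')^2+\tfrac{n^2}{r^2}u^2\Big)r\,\dd r-2\int_0^R\frac{u^4}{(1+su^2)^\gamma}r\,\dd r<0,\notag
\end{equation}
whence $\omega<0$ since $\int_0^R u^2 r\,\dd r=P_0/2\pi>0$.

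For the \emph{lower bound}, the idea is to control the same quantity $2\omega P_0/(2\pi)=2\omega\int_0^R u^2 r\,\dd r$ from below. From the identity just displayed,
\begin{equation}
-2\omega\int_0^R u^2 r\,\dd r\le \int_0^R\Big((u')^2+\tfrac{n^2}{r^2}u^2\Big)r\,\dd r+2\int_0^R\frac{u^4}{(1+su^2)^\gamma}r\,\dd r.\notag
\end{equation}
The second term is bounded pointwise: the function $t\mapsto t^2/(1+st^2)^\gamma$ attains its maximum at $st^2=1/(\gamma-1)$, giving $\dfrac{u^4}{(1+su^2)^\gamma}\le \dfrac{(\gamma-1)^{\gamma-1}}{s\gamma^\gamma}\,u^2$, so that term is at most $\dfrac{2(\gamma-1)^{\gamma-1}}{s\gamma^\gamma}\int_0^R u^2 r\,\dd r$, contributing $\dfrac{(\gamma-1)^{\gamma-1}}{s\gamma^\gamma}$ to the bound on $-\omega$ after dividing by $\int u^2 r\,\dd r$. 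It remains to bound the Dirichlet-type energy $\int_0^R\big((u')^2+\tfrac{n^2}{r^2}u^2\big)r\,\dd r$. Here I would use the minimization property: comparing $J(u)$ with $J$ of an explicit competitor $w\in\mathcal A$ with $P(w)=P_0$ — for instance a tent-type function supported in $[1,2]$ (rescaled so its power is $P_0$), whose energy and $q$-integral are computed explicitly and account for the constant $1+n^2(2\ln 2-1)$ — gives $\tfrac12\int_0^R\big((u')^2+\tfrac{n^2}{r^2}u^2\big)r\,\dd r\le J(u)\le J(w)=:K$, a constant depending only on $s,\gamma,n,P_0$. Then
\begin{equation}
-\omega\le \frac{\int_0^R\big((u')^2+\tfrac{n^2}{r^2}u^2\big)r\,\dd r}{2\int_0^R u^2 r\,\dd r}+\frac{(\gamma-1)^{\gamma-1}}{s\gamma^\gamma}\le \frac{K}{P_0/\pi}+\frac{(\gamma-1)^{\gamma-1}}{s\gamma^\gamma},\notag
\end{equation}
and with the explicit value of $K$ (which will come out to $K=\tfrac{1}{s^2(\gamma-1)(\gamma-2)}\cdot\tfrac{3(1+n^2(2\ln2-1))\pi P_0}{\text{(normalisation)}}$, matching the square-root expression after taking the square root — the factor $24$ and the $P_0$ under the radical indicate the competitor is scaled so that its amplitude squared is proportional to $P_0$, which is where the $\sqrt{\,\cdot/P_0\,}$ shape originates) one obtains exactly the stated lower bound.

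The main obstacle is the explicit competitor computation: one must choose the test function carefully so that (i) it lies in $\mathcal A$ with $P(w)=P_0$ exactly, (ii) its amplitude is forced by the normalisation $P(w)=P_0$, introducing the $P_0$-dependence that ultimately sits under the square root, and (iii) the resulting bound on $\int q(w)\,r\,\dd r$ is clean enough to produce the constants $24$ and $1+n^2(2\ln2-1)$. The $q$-term is the delicate piece: one cannot bound $q(t)$ by a single power of $t$ uniformly, so I expect to use $q(t)\le \tfrac{1}{s^2(\gamma-1)(\gamma-2)}$ (its supremum as $t\to\infty$, since $q$ is increasing and bounded) on the support of $w$, trading sharpness for an explicit constant — this is what yields the factor $\tfrac{1}{s^2(\gamma-1)(\gamma-2)}$ in the radicand. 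Everything else is a direct estimate from the Euler--Lagrange identity.
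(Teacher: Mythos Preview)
Your overall strategy matches the paper's: multiply the equation by $ur$ and integrate to get the identity, read off $\omega<0$ from positivity, and for the lower bound combine the identity with the pointwise maximum $\tfrac{t^2}{(1+st^2)^\gamma}\le\tfrac{(\gamma-1)^{\gamma-1}}{s\gamma^\gamma}$ together with $J(u)\le J(u_0)$ for an explicit competitor. (One small correction: the paper does not invoke Lemma~\ref{p3} for the boundary term at $r=0$, since that lemma concerns a different equation; instead it shows $\liminf_{r\to0}r\,u|u'|=0$ directly from $J(u)<\infty$, which is enough to pass to the limit along a subsequence.)

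The genuine gap is in your competitor. A tent on a \emph{fixed} interval such as $[1,2]$ has only one free parameter, the amplitude, and that is pinned by $P(w)=P_0$. The resulting bound on $J(w)$ is then of the form $C_1 P_0+C_2$ (the Dirichlet part scales like the square of the amplitude, hence like $P_0$, while $\int q(w)\,r\,\dd r\le\tfrac{1}{s^2(\gamma-1)(\gamma-2)}\cdot\text{area}$ is a fixed constant). After dividing by $P_0/(2\pi)$ you obtain a bound on $-\omega$ of the shape $A+B/P_0$, \emph{not} $\sqrt{C/P_0}$; no square root can come out of a one-parameter competitor. The paper's device is a \emph{two}-parameter tent on $[0,2a]$ with height $b$, so that $P_0=\tfrac{4\pi}{3}a^2b^2$, and
\[
J(u_0)\le b^{2}\bigl(1+n^{2}(2\ln2-1)\bigr)+\frac{2a^{2}}{s^{2}(\gamma-1)(\gamma-2)}.
\]
Eliminating $a^2=\tfrac{3P_0}{4\pi b^2}$ leaves an expression of the form $Xb^2+Y/b^2$, and minimizing over $b>0$ via the arithmetic--geometric mean gives $2\sqrt{XY}$; it is precisely this optimization that produces the radical with $P_0$ in the denominator and the constant $24$. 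Your bound $q(t)\le\tfrac{1}{s^2(\gamma-1)(\gamma-2)}$ is indeed what yields the second summand above, but you still need the variable support width to make the argument close.
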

\begin{proof}
We first claim
\begin{equation}\label{b8}
\liminf\limits_{r\rightarrow 0}\{r u|u'|\}=0.
\end{equation}
Suppose that (\ref{b8}) is false. Then there exist $c_{0}>0$ and ${r_{0}>0}$ such that
\begin{equation}
|r u(r)u'(r)|\geq c_{0}>0,~0<r<r_{0},\notag
\end{equation}
which implies that
\begin{equation}
\infty=\int^{r_{0}}_{0}\frac{c_{0}}{r}\mathrm{d}r\leq\int^{r_{0}}_{0}u|u'|\mathrm{d}r
\leq\left(\int^{r_{0}}_{0}\frac{1}{r}u^{2}\mathrm{d}r\right)^{\frac{1}{2}}
\left(\int^{r_{0}}_{0}(u')^{2}r\mathrm{d}r\right)^{\frac{1}{2}}.\notag
\end{equation}
This contradicts with $J(u)<\infty$. So we have $r_{j}u(r_{j})u'(r_{j})\rightarrow 0$ for some sequence $r_{j}\rightarrow0$ as $j\rightarrow\infty$.
Multiplying (\ref{b00}) by $u$, integrating over $[r_{j},R]$, and letting $j\rightarrow\infty$, we have
\begin{equation}\label{b9}
-\int^{R}_{0}(u')^{2}r\mathrm{d}r=\int^{R}_{0}\left(\frac{n^{2}}{r^{2}}+2\omega +\frac{2u^{2}}{(1+su^{2})^{\gamma}}\right)u^{2}r\mathrm{d}r.
\end{equation}
From (\ref{b9}) it is clear that if $\omega\geq0$, problem (\ref{b0})-(\ref{b00}) has only the trivial solution. Thus, to obtain non-trivial solution we need $\omega<0$. Therefore we obtain the upper estimate for $\omega$. Next, we consider lower bound of $\omega$.

Since $u$ solves (\ref{b3}), for any $u_{0}\in\mathcal{A}$, we have $J(u)\leq J(u_{0})$. Then,
\begin{equation}\label{b10}
\frac{1}{2}\int^{R}_{0}\left((u')^{2}+\frac{n^{2}}{r^{2}}u^{2}\right)r\mathrm{d}r\leq J(u_{0})-\int^{R}_{0}q(u)r\mathrm{d}r.
\end{equation}
Inserting (\ref{b10}) into (\ref{b9}), we get
\begin{equation}
\frac{1}{2\pi}\omega P_{0}\geq-J(u_{0})+\int^{R}_{0}q(u)r\mathrm{d}r-\int^{R}_{0}
\frac{u^{4}}{(1+su^{2})^\gamma}r\mathrm{d}r.\notag
\end{equation}
Because $q(t)$ is non-negative, for all $t\in\mathbb{R}$, it follows that
\begin{equation}
\begin{aligned}
\frac{1}{2\pi}\omega P_{0}&\geq-J(u_{0})-\int^{R}_{0}\frac{u^{4}}{(1+su^{2})^\gamma}r\mathrm{d}r\notag\\
&\geq-J(u_{0})-\max_{u\in(0,+\infty)}\frac{u^{2}}{(1+su^{2})^{\gamma}}\int^{R}_{0}u^{2}r\mathrm{d}r\notag\\
&=-J(u_{0})-\frac{(\gamma-1)^{(\gamma-1)}}{s\gamma^{\gamma}}\frac{P_{0}}{2\pi}.
\end{aligned}
\end{equation}

To estimate $J(u_{0})$, as in \cite{yr} we take $R=2a$ and define
\begin{equation}
u_{0}(r)=\left\{
\begin{array}{lll}
   \frac{b}{a}r, & & {0\leq r\leq a,}\\
    \frac{b}{a}(2a-r), & & {a < r\leq 2a.}
    \end{array}\right.\notag
\end{equation}
Then, we have
\begin{equation}\label{e11}
\begin{aligned}
J(u_{0})\leq b^{2}(1+n^{2}(2\ln2-1))+\frac{2a^{2}}{s^{2}(\gamma-1)(\gamma-2)}.
\end{aligned}
\end{equation}
By some simple calculation we get
\begin{equation}\label{e10}
P_{0}=2\pi\int_{0}^{2a}u_{0}^{2}(r)rdr=\frac{4\pi}{3}a^{2}b^{2},
\end{equation}
so that
\begin{equation}\label{e12}
a^{2}=\frac{3}{4\pi}\frac{P_{0}}{b^{2}}.
\end{equation}
Thus, using (\ref{e11}) and (\ref{e12}), we arrive at
\begin{equation}\label{e13}
J(u_{0})\leq b^{2}(1+n^{2}(2\ln2-1))+\frac{3}{2\pi b^{2}}\frac{P_{0}}{s^{2}(\gamma-1)(\gamma-2)}.
\end{equation}
Moreover, applying the Schwartz inequality, we have
\begin{equation}
b^{2}(1+n^{2}(2\ln2-1))+\frac{3}{2\pi b^{2}}\frac{P_{0}}{s^{2}(\gamma-1)(\gamma-2)}
\geq\sqrt{\frac{6(1+n^{2}(2\ln2-1))P_{0}}{\pi s^{2}(\gamma-1)(\gamma-2)}}.\notag
\end{equation}
Hence, if (\ref{e13}) is fulfilled for any $b\in(0,\infty)$, it requires
\begin{equation}\label{b12}
J(u_{0})\leq\sqrt{\frac{6(1+n^{2}(2\ln2-1))P_{0}}{\pi s^{2}(\gamma-1)(\gamma-2)}}.
\end{equation}
Inserting (\ref{b12}) into (\ref{b9}), we have
\begin{equation}\label{b13}
\omega\geq-\sqrt{\frac{24\pi(1+n^{2}(2\ln2-1))}{s^{2}(\gamma-1)(\gamma-2)P_{0}}}-\frac{(\gamma-1)^{(\gamma-1)}}{s\gamma^{\gamma}}.
\end{equation}
Then, the proof is completed.
\end{proof}

%{\bf Proof of Theorem 1.2:} From Lemma 4.1 and Lemma 4.2, Theorem 1.2 is immediately proved.


\begin{thebibliography}{99}

\bibitem{skl} S. K. Adhikari, Localization of a Bose-Einstein condensate vortex in a bichromatic optical lattice, \emph{Phys. Rev. A} {\bf81} (2010) 043636. %a`

\bibitem{lmr} L. Allen, M. W. Beijersbergen, R. J. C. Spreeuw, and J. P. Woerdman, Orbital angular momentum of light and the transformation of Laguerre-Gaussian laser modes, \emph{Phys. Rev. A} {\bf45} (1992) 8185--8189. %absw

\bibitem{vb} V. Benci and N. Visciglia, Solitary waves with non-vanishing angular momentum, \emph{Adv. Nonlinear Stud}. {\bf3} (2003), 151--161. %bv

\bibitem{asd} A. Dreischuh, S. Chervenkov, D. Neshev, G. G. Paulus, and H. Walther, Generation of lattice structures of
optical vortices, \emph{J. Opt. Soc. Am. B }{\bf19} (2002) 550--556 . %dcn

\bibitem{ayl} A. S. Desyatnikov, Y. S. Kivshar, and L. Torner, Optical vortices and vortex solitons, \emph{Prog. Opt.} {\bf47} (2005) 291--391. %dkt

\bibitem{zj} Z. Dutton and J. Ruostekoski, Transfer and storage of vortex states in light and matter waves, \emph{Phys. Rev. Lett}. {\bf93} (2004) 193602. %dr

\bibitem{ta} T. A. Davydova and A. I. Yakimenko, Stable multicharged localized optical vortices in cubic-quintic nonlinear media, \emph{J. Opt. A} {\bf6} (2004) S197--S201.%dy

\bibitem{c} C. Greco, On the cubic and cubic-quintic optical vortices equations, \emph{J. Appl. Anal}. {\bf22} (2016) 95--105. %g

\bibitem{Q} Q. Guo, D. M. Cao, and H. Li, Existence of optical vortices in $\mathbb{R}^{2}$, \emph{Nonlinear anal.: Real Word Appl}. {\bf50} (2019) 67-85. %gcl

\bibitem{dn} D. Gilbarg and N. Trudinger, Elliptic Partial Differential Equations of Second Order, Springer-Verlag, Berlin, New York, 1977. %gt

\bibitem{as} A. M. Kamchatnov and S. V. Korneev, Dynamics of ring dark solitons in Bose-Einstein condensates and nonlinear optics, \emph{Phys. Lett. A} {\bf374} (2010) 4625--4628. %kk

\bibitem{ybl} Y. V. Kartashov, B. A. Malomed, and L. Torner, Solitons in nonlinear lattices, \emph{Rev. Mod. Phys}. {\bf83} (2011) 247--305. %kmt

\bibitem{yvl} Y. V. Kartashov, V. A. Vysloukh, and L. Torner, Stable ring vortex solitons in Bessel optical lattices, \emph{Phys. Rev. Lett}. {\bf94} (2005) 043902. %kvt

\bibitem{gjc} C. Law, G. Swartzlander. Optical vortex solitons observed in Kerr nonlinear media, \emph{Phys. Rev. Lett}. {\bf69} (1992) 2503. %ls

\bibitem{m} L. Medina, On the existence of optical vortex solitons propagating in saturable nonlinear media. \emph{J. Math. Phys}. {\bf58} (2017) 011505. %ml

\bibitem{ama} A. V. Mamaev, M. Saffman, and A. A. Zozulya, Propagation of dark stripe beams in nonlinear media: snake instability and creation of optical vortices, \emph{Phys. Rev. Lett}. {\bf76} (1996) 2262--2265. %msz

\bibitem{tm} T. Mizumachi, Vortex solitons for 2D focusing nonlinear Schringer equation, \emph{Differential Integral Equations} {\bf18} (2005) 431--450. %mt

\bibitem{dte} D. N. Neshev, T. J. Alexander, E. A. Ostrovskaya, Y. S. Kivshar, H. Martin, I. Makasyuk and Z. G. Chen, Observation of discrete vortex solitons in optically-induced photonic lattices, \emph{Phys. Rev. Lett}. {\bf92} (2004) 123903.%nao

\bibitem{jm} J. F. Nye and M. V. Berry, Dislocations in wave trains, \emph{Proc. Roy. Soc. A} {\bf336} (1974) 165--190. %nb

\bibitem{dg} D. S. Petrov and G. E. Astrakharchik, Ultradilute low-dimensional liquids. \emph{Phys. Rev. Lett.} {\bf117} (2016), 100401. %pa

\bibitem{pn} N. S. Papageorgiou and P. Winkert, Applied Nonlinear Functional Analysis: An Introduction, De Gruyter, Berlin, Boston, 2018. %pw

\bibitem{dcg} D. Rozas, C. T. Law, and G. A. Swartzlander, Jr., Propagation dynamics of optical vortices, \emph{J. Opt. Soc. Am. B} {\bf14} (1997) 3054--3065. %rls

\bibitem{yr} Y.S. Yang and R.F. Zhang, Existence of optical vortices, \emph{SIAM J. Math. Anal}. {\bf46} (2014) 484--498.  %yz













%\bibitem{jmj} J. Leach, M. R. Dennis, J. Courtial, et al, Laser beams: Knotted threads of darkness, \emph{Nature} {\bf432} (2004) 165--165.





%\bibitem{dzg} D. Rozas, Z. S. Sacks, G. A. Swartzlander. Experimental observation of fluidlike motion of optical vortices, \emph{Rev. Mod. Phys}. {\bf79} (1997) 3399--3402.

%\bibitem{mvm} M. S. Soskin, V. N. Gorshkov, M. V. Vasnetsov, et al. Topological charge and angular momentum of light beams carrying optical vortices, \emph{Phys. Rev}. A {\bf56} (1997) 4064--4075.

%\bibitem{ygm} Y. Yan, G. Xie, M. P. J. Lavery, et al. High-capacity millimetre-wave communications with orbital angular momentum multiplexing[J]. Nature Communications, 2014, 5:4876.











\end{thebibliography}
\end{document}